\newtheorem{proposition}{Proposition}
\newtheorem{theorem}{Theorem}[section]
\theoremstyle{definition}
\newtheorem{definition}{Definition}
\theoremstyle{remark}
    \let\@fnsymbol\@arabic
\title{$L_1$ spline fits via sliding window process : continuous and discrete cases}%
\author{Laurent Gajny\footnote{Arts et M\'etiers ParisTech, LSIS - UMR CNRS 7296, 8 Boulevard Louis XIV, 59046 Lille Cedex, France}, Olivier Gibaru$^{1,}$\footnote{INRIA Lille-Nord-Europe, NON-A research team, 40, avenue Halley 59650 Villeneuve d'Ascq}, Eric Nyiri$^1$}%
\date{}%
\begin{document}
\maketitle

\begin{abstract}
  Best $L_1$ approximation  of  the  Heaviside  function  and best $\ell_1$ approximation of multiscale univariate datasets by cubic splines have a Gibbs phenomenon. Numerical experiments show that it can be reduced by using $L_1$ spline fits which are best  $L_1$ approximations in an appropriate spline space obtained by the union of  $L_1$ interpolation splines. We prove here the existence of $L_1$ spline fits which has never been done to the best of our knowledge. Their  major disadvantage is  that obtaining  them can  be time consuming.  Thus we propose a sliding window method on seven nodes which is as efficient as the global method both for functions and datasets with abrupt changes of magnitude but within a linear complexity on the number of spline nodes.\\
  
  \textbf{Keywords :} Best approximation, $L_1$ norm,  shape preservation,  polynomial  spline,  Heaviside function, sliding window
\end{abstract}

\section{Introduction}

Over the past fifteen years, $L_1$ minimization-based methods have shown very interesting features for the interpolation and approximation of continuous or discontinuous function and irregular geometric data. In \cite{Moskona1995}, Moskona \emph{et al.} have shown the Gibbs phenomenon existing for best $L_1$ trigonometric approximation of the Heaviside function is lower than the one observed using the $L_2$ norm. Saff and Tashev have done a similar work leading to the same conclusion using polygonal lines \cite{SaffTashev1999}.\\
Similarly to classical cubic interpolation splines which minimize the $L_2$ norm of the second derivative, Lavery has defined cubic Hermite interpolation splines which minimizes the $L_1$ norm of the second derivative \cite{Lavery2000}. He has noted that this strategy enabled to delete completely the Gibbs phenomenon observed for classical $L_2$ cubic interpolation spline when applied on the Heaviside function. It has later been shown formally by Auquiert \emph{et al.} \cite{Auquiert2007}.\\
Further work has then focused on an appropriate combination of the best $L_1$ approximation functional and the variational $L_1$ functional used for the interpolation problem. Lavery has firstly proposed a linear combination of the two functionals and called the resulting splines $L_1$ smoothing splines \cite{Lavery2000b}. They do not introduce oscillation on multiscale univariate datasets contrary to $L_2$ smoothing splines. However, the regularization parameter used in the linear combination of $L_1$ functionals cannot be easily fixed.\\
Lavery has then proposed another kind of $L_1$ splines named $L_1$ spline fits \cite{Lavery2004}. They are best $L_1$ approximations in an appropriate spline space obtained by the union of $L_1$ interpolation splines. Like $L_1$ smoothing splines, they do not introduce oscillations but they have the asset that they do not need any additional parameter. Existence of such splines was not shown in \cite{Lavery2004}. We prove in this paper that $L_1$ splines fits at a given set of nodes exist for every function in $L_1[a,b]$.\\
One must admit that the intrinsic non-linearity of $L_1$ norm problems imply that a closed form solution is not available in general. A global numerical solvation is then currently used \cite{Lavery2000,Lavery2000b,Lavery2004,Auquiert2007b,Dobrev2010}. Another strategy has been introduced in 2011 by Nyiri, Auquiert and Gibaru for the interpolation problem \cite{Nyiri2011}. The algorithm they design is based on a sliding-window process. It consists in finding a set of local solutions on a limited number of successive points - five for the interpolation problem. By keeping appropriate information - the derivative at the middle point of the window - one can easily construct a global interpolating function which has similar shape preserving properties than the global solution. Moreover, this process enables to have a linear complexity algorithm and it can be parallelized. This process has also been applied in recent articles for a problem of approximation of data with prescribed error using $L_1$ norm \cite{Gajny2013,Gajny2014}. \\
Recently, Wang {et al.} proposed a method to compute $L_1$ spline fits with a global algorithm but based on a five-point interpolation rule to fix derivatives at the spline nodes \cite{Wang2014}. Indeed, the first derivative at a given node is determined using only its four neighbours while the value of the spline is determined by a minimization process on the whole dataset. We propose in this article another approach following the work in \cite{Nyiri2011,Gajny2013,Gajny2014}. We investigate to define an appropriate sliding window process to compute locally-computed $L_1$ spline fits close to the global one.\\
In the first section, we recall some generalities about $L_1$ cubic Hermite interpolation splines. We show that the union of such splines over all possible Lagrange interpolation is a closed set. This helps in the second section to show the existence of $L_1$ spline fits previously introduced in the literature. We introduce in section 3 and 4 sliding-window algorithms to determine a locally-computed $L_1$ spline fits and we compare them to each other. Conclusions are drawn in a last section.

\section{The set of $L_1$ cubic Hermite interpolation splines}
Let $(x_i,y_i)$, $i=1,\dots,n$, where $x_1<x_2<\dots<x_n$, be $n$ data points belonging to the graph of a function $f$. Let $Her(\mathbf{x})$  the space of cubic Hermite splines with nodes $\mathbf{x}=\{x_1,x_2,\dots,x_n\}$. A $L_1$ cubic Hermite interpolation spline of this data is a cubic Hermite spline $\gamma^*\in Her(\mathbf{x})$ a solution of :
\begin{equation}
 \min_{\gamma \in Her(\mathbf{x})} \int_{x_1}^{x_n} \vert \gamma''(x) \vert \mathrm{d}x,
\end{equation}
under the Lagrange interpolation constraints :
\begin{equation}
  \gamma(x_i)=f_i, \ i=1,2,\dots,n.
\end{equation}
Lavery has shown that a solution of this problem always exists. By mean of numerical experiments, he has noted that the resulting splines were very efficient to preserve the shape of the Heaviside function (see Figure \ref{L1L2interp}). Auquiert later has shown that a $L_1$ cubic Hermite interpolation spline with six knots or more with at least three knots on each part of the Heaviside function preserve both linearities of the Heaviside function and then do not lead to a Gibbs phenomenon \cite{Auquiert2007}. This is the major asset of $L_1$ cubic Hermite interpolation splines.\\

\begin{figure}[!h]
\centering
  \begin{minipage}{0.45\linewidth}
    \includegraphics[width=\linewidth]{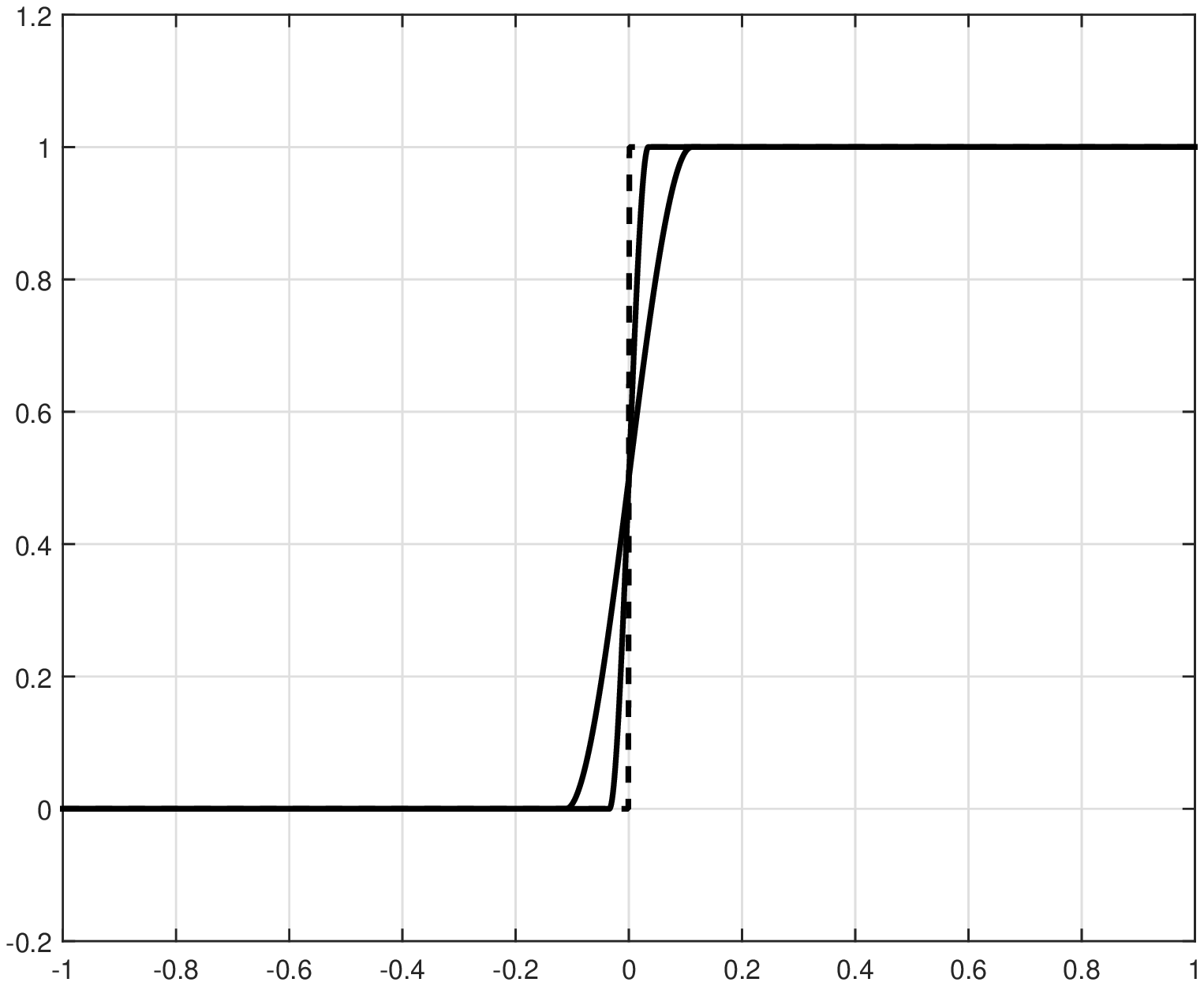}
  \end{minipage}
    \begin{minipage}{0.45\linewidth}
    \includegraphics[width=\linewidth]{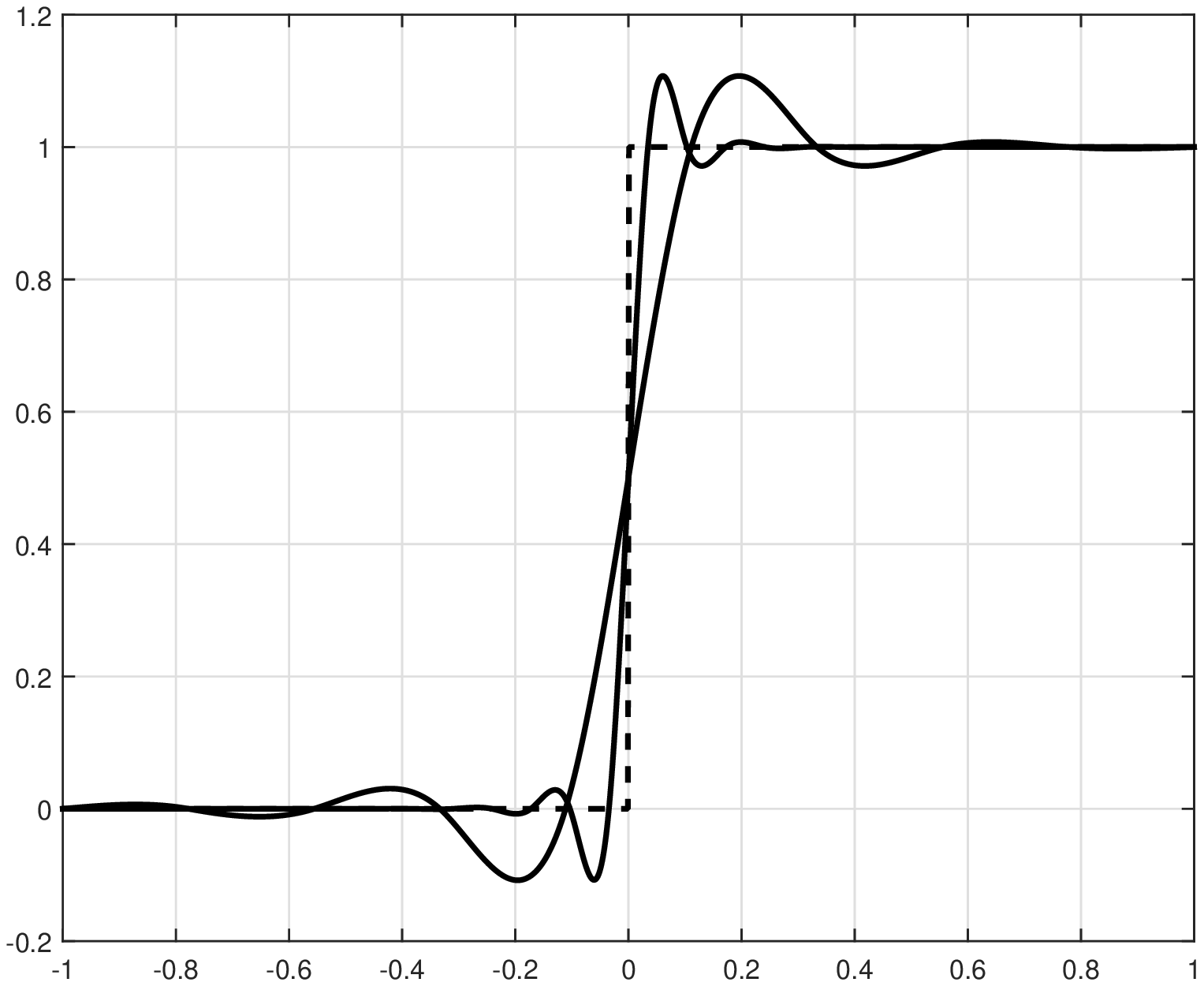}
  \end{minipage}
  \caption{$L_1$ (left) and $L_2$ (right) interpolation splines (solid lines) of the Heaviside function (dotted line) with 10 and 30 equally spaced knots.}
  \label{L1L2interp}
\end{figure}

We consider now the union of all $L_1$ cubic Hermite interpolation splines as follow :
\begin{equation}
  \mathcal{F}_\mathbf{x} = \bigcup_{\mathbf{y}\in \mathbf{R}^n}  \mathrm{argmin}\left\{ \int_{x_1}^{x_n} \vert \gamma''(x) \vert \ \mathrm{d}x, \ \gamma \in Her(\mathbf{x}), \ \gamma(x_i)=y_i, \ i=1,\dots,n\right\}.
\end{equation}
This set will be fundamental in the definition of $L_1$ spline fits. We give in the following an important property of this set which has never been proved before to the best of our knowledge and will be very important in the next section.

\begin{proposition}
  Given $\mathbf{x}=\{x_1<x_2<\dots<x_n\} \in \mathbf{R}^n $, the set  $\mathcal{F}_\mathbf{x}$ is closed.
\end{proposition}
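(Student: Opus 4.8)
The plan is to coordinatize the finite-dimensional space $Her(\mathbf{x})$ by Hermite data. A cubic Hermite spline $\gamma \in Her(\mathbf{x})$ is uniquely determined by its nodal values $\mathbf{y}=(\gamma(x_1),\dots,\gamma(x_n))$ and nodal derivatives $\mathbf{d}=(\gamma'(x_1),\dots,\gamma'(x_n))$, so I identify $Her(\mathbf{x})$ with $\mathbf{R}^{2n}$ through $\gamma \leftrightarrow (\mathbf{y},\mathbf{d})$ and write the functional as $J(\mathbf{y},\mathbf{d})=\int_{x_1}^{x_n} |\gamma''(x)|\,\mathrm{d}x$. On each subinterval $[x_i,x_{i+1}]$ the restriction of $\gamma''$ is affine with coefficients depending linearly on $(y_i,y_{i+1},d_i,d_{i+1})$, so for every fixed $x$ the value $\gamma''(x)$ is a linear function of $(\mathbf{y},\mathbf{d})$. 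Two facts I would record first: the map $(\mathbf{y},\mathbf{d})\mapsto |\gamma''(x)|$ is convex for each $x$, and integrating over $[x_1,x_n]$ preserves both convexity and continuity, so $J$ is a continuous, nonnegative, convex function on $\mathbf{R}^{2n}$.

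The essential difficulty is that $\mathcal{F}_\mathbf{x}$ is a union of argmin sets indexed by the continuum $\mathbf{y}\in\mathbf{R}^n$, and an arbitrary union of (even closed) sets need not be closed; so I cannot argue one fibre at a time. Instead I would represent $\mathcal{F}_\mathbf{x}$ as a single level set. Define the value function $m(\mathbf{y})=\inf_{\mathbf{d}\in\mathbf{R}^n} J(\mathbf{y},\mathbf{d})$. Since $J\ge 0$ and $J(\mathbf{y},\mathbf{0})<\infty$, $m$ is finite on all of $\mathbf{R}^n$, and Lavery's existence result guarantees the infimum is attained, so the argmin at $\mathbf{y}$ is exactly $\{\mathbf{d}:J(\mathbf{y},\mathbf{d})=m(\mathbf{y})\}$. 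Taking the union over $\mathbf{y}$ yields the clean description $\mathcal{F}_\mathbf{x}=\{(\mathbf{y},\mathbf{d})\in\mathbf{R}^{2n}: J(\mathbf{y},\mathbf{d})-m(\mathbf{y})=0\}$, exhibiting $\mathcal{F}_\mathbf{x}$ as the zero set of $\Phi(\mathbf{y},\mathbf{d}):=J(\mathbf{y},\mathbf{d})-m(\mathbf{y})$. As $\Phi\ge 0$ everywhere, it suffices to prove that $\Phi$, equivalently $m$, is continuous.

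Continuity of the value function is where the real work lies, and I would obtain it from convexity: partial minimization of the jointly convex $J$ over the $\mathbf{d}$-variables produces a convex function $m$ of $\mathbf{y}$, and a finite convex function on $\mathbf{R}^n$ is automatically continuous. If a hands-on argument is preferred, upper semicontinuity follows by freezing a minimizer $\mathbf{d}^\star$ at a point $\mathbf{y}^\star$ and using $m(\mathbf{y})\le J(\mathbf{y},\mathbf{d}^\star)\to J(\mathbf{y}^\star,\mathbf{d}^\star)=m(\mathbf{y}^\star)$, while lower semicontinuity follows by taking minimizers $\mathbf{d}^{(k)}$ along $\mathbf{y}^{(k)}\to\mathbf{y}^\star$, showing they stay bounded via the per-interval estimate $\int_{x_i}^{x_{i+1}}|\gamma''| \ge |d_i - (y_{i+1}-y_i)/(x_{i+1}-x_i)|$ (so an escaping derivative forces $J\to\infty$), extracting a convergent subsequence, and passing to the limit by continuity of $J$. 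Either way $m$, hence $\Phi$, is continuous, and $\mathcal{F}_\mathbf{x}=\Phi^{-1}(\{0\})$ is closed.

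I expect the main obstacle to be precisely the lower semicontinuity of $m$, since it is the step that requires compactness of the set of near-minimizers; the coercivity estimate above, uniform in $\mathbf{y}$ over compact sets, is what rescues it. The convexity route avoids invoking this explicitly, but it rests on the same finiteness and attainment facts.
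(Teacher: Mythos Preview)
Your argument is correct, but it proceeds along a genuinely different route from the paper. The paper takes a convergent sequence $s_p\to s$ in $Her(\mathbf{x})$ with each $s_p\in\mathcal{F}_\mathbf{x}$, uses continuity of $J$ to get $J(s_p)\to J(s)$, and then, for an arbitrary competitor $\gamma$ sharing the nodal values of $s$, builds a competing sequence $\gamma_p$ (same derivatives as $\gamma$, nodal values shifted to those of $s_p$) so that $J(s_p)\le J(\gamma_p)\to J(\gamma)$; passing to the limit yields $J(s)\le J(\gamma)$. In your language this is exactly the upper-semicontinuity half of $m$, and it suffices because the derivative part of the sequence is already assumed convergent, so no coercivity or subsequence extraction is needed. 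Your level-set representation $\mathcal{F}_\mathbf{x}=\Phi^{-1}(\{0\})$ together with the convexity argument for continuity of $m$ is more conceptual and transfers unchanged to any jointly convex continuous $J$, while the paper's competitor-perturbation trick is more elementary (no convex analysis invoked) and slightly shorter for this particular problem. Your coercivity estimate $\int_{x_i}^{x_{i+1}}|\gamma''|\ge |d_i-(y_{i+1}-y_i)/(x_{i+1}-x_i)|$ is correct (mean value theorem gives a point where $\gamma'$ equals the secant slope), but note that it is only needed in your alternative hands-on route, not in the convexity route, and the paper avoids it altogether.
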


\begin{proof}
Let us define a norm on  $Her(\mathbf{x})$. Let $s \in \overline{\mathcal{F}_{\mathbf{x}}}$ then by definition there exists a sequence :
\[\left(s_p\in \mathrm{argmin} \left\{\int_{a}^{b} |\gamma''(x)| \mathrm{d}x, \ s\in Her(\mathbf{x}),\ \gamma(x_k)=q_k^{(p)},\ k=1,\dots,m\right\}\right)_{p\in \mathbf{N}}\]
which converges to $s\in Her(\mathbf{x})$. For all $p \in \mathbf{N}$, $s_p$ is a cubic Hermite spline and is then defined by $2n$ coefficients $q_k^{(p)}$, $b_k^{(p)}$, $k=1,\dots,n$, respectively the values and the first derivative values of $s_p$ at abscissae $x_k$. By convergence hypothesis in $Her(\mathbf{x})$, there existe real values $q_k^*$, $b_k^*$, $k=1,\dots,n$ such that :
\begin{equation}
\begin{split}
  q_k^{(p)} & \underset{p\rightarrow +\infty}{\longrightarrow} q_k^*,\\
  b_k^{(p)} & \underset{p\rightarrow +\infty}{\longrightarrow} b_k^*.
\end{split}
\label{eq_conv}
\end{equation}
By the unicity of the limit, $s$ is defined by these $2n$ coefficients. We then show that the minimization property of the splines $s_p$ is stable by passing to the limit.\\
We deduce from \eqref{eq_conv} that $(s_p'')_{p\in \mathbf{N}}$ converges simply almost everywhere to $s''$. Moreover, for all $p\in\mathbf{N}$, $s_p''$ is piecewise linear. Then we can easily bound it on the interval $[a,b]$ by an integrable function. By the dominated convergence theorem, it follows that :
\begin{equation}
  \int_a^b |s_p''(x)| \ \mathrm{d}x \underset{p\rightarrow +\infty}{\longrightarrow} \int_a^b |s''(x)| \ \mathrm{d}x.
\end{equation}
Let $\gamma \in  Her({\mathbf{x}})$ such that $\gamma(x_k)=q_k^*, \ k=1,\dots,n$. By the first assertion in \eqref{eq_conv}, there exists a sequence $(\gamma_p \in Her({\mathbf{x}}))_{p\in\mathbf{N}}$ such that for all $p\in \mathbf{N}$ et $k=1,\dots,n$, $\gamma_p(x_k)=q_k^{(p)}$ that converges to $\gamma$. We easily show that :
\begin{equation}
  \int_a^b |\gamma_p''(x)| \ \mathrm{d}x \underset{p\rightarrow + \infty}{\longrightarrow} \int_a^b |\gamma''(x)| \ \mathrm{d}x.
\end{equation}
For all $p \in \mathbf{N}$, since $s_p\in \widetilde{\mathcal{S}}_{1,\mathbf{x},\mathbf{q}^{(n)}}$, it follows that :
\begin{equation}
  \int_a^b |s_n''(x)| \ \mathrm{d}x \le  \int_a^b |\gamma_p''(x)| \ \mathrm{d}x.
\end{equation}
By passing to the limite, we have that for all $\gamma \in  Her({\mathbf{x}})$ such that $\gamma(x_k)=q_k^*$ :
  \begin{equation}
  \int_a^b |s''(x)| \ \mathrm{d}x \le  \int_a^b |\gamma''(x)| \ \mathrm{d}x.
\end{equation}
We conclude that  $\mathcal{F}_{1,\mathbf{x}}$ is closed in $Her(\mathbf{x})$.
\end{proof}

\section{Best approximation using $L_1$ spline fits}

Let us first define these splines introduced in \cite{Lavery2004}.
\begin{definition}
Given a function $f\in L_1[a,b]$, $a,b \in \mathbf{R}$ and a set of knots $\mathbf{x}=\{a=x_1<x_2<\dots<x_n=b\}$, a $L_1$ spline fit of the function $f$ at knots $\mathbf{x}$ is a best $L_1$ approximation of $f$ in $\mathcal{F}_\mathbf{x}$. In other words, it is a solution of :
\begin{equation}
  \min_{s \in \mathcal{F}_\mathbf{x}} \int_{a}^{b} \vert s(x) - f(x)\vert \ \mathrm{d}x.
\end{equation}
\end{definition}
We prove with the next theorem that $L_1$ spline fits are well defined.
\begin{theorem}
  $L_1$ splines fit exist for every function $f\in L_1[a,b]$ and every set of knots $\mathbf{x}=\{a=x_1<x_2<\dots,x_n=b\}$.
\end{theorem}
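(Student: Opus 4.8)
The plan is to use the direct method of the calculus of variations. Write $J(s)=\int_a^b |s(x)-f(x)|\,\mathrm{d}x$ for the objective functional of the definition. Since $J\geq 0$ on $\mathcal{F}_\mathbf{x}$, the infimum $m=\inf_{s\in\mathcal{F}_\mathbf{x}} J(s)$ is a well-defined nonnegative real number, and I would pick a minimizing sequence $(s_p)_{p\in\mathbf{N}}$ in $\mathcal{F}_\mathbf{x}$ with $J(s_p)\to m$. The whole argument takes place inside $Her(\mathbf{x})$, which is finite-dimensional since a cubic Hermite spline on $n$ nodes is determined by its $2n$ values and first derivatives at the nodes; this finite-dimensionality is what will let me extract a convergent subsequence.

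The key step is to show that the minimizing sequence stays bounded, so that it cannot escape to infinity. For this I would first observe that $s\mapsto \int_a^b |s(x)|\,\mathrm{d}x$ is in fact a genuine norm on $Her(\mathbf{x})$, not merely a seminorm: a cubic Hermite spline is continuous, so $\int_a^b|s|=0$ forces $s\equiv 0$. On the finite-dimensional space $Her(\mathbf{x})$ this $L_1$ norm is therefore equivalent to the coefficient norm introduced in the proof of the Proposition. The reverse triangle inequality then gives $J(s)\geq \|s\|_{L_1[a,b]}-\|f\|_{L_1[a,b]}$, so $J(s)\to+\infty$ as $\|s\|\to+\infty$; that is, $J$ is coercive. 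In particular $(s_p)$ is bounded, and by the Bolzano--Weierstrass theorem in finite dimension some subsequence $(s_{p_k})$ converges to a limit $s^*\in Her(\mathbf{x})$.

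It remains to verify that this limit lies in $\mathcal{F}_\mathbf{x}$ and attains the infimum. Membership is exactly the content of the Proposition: since $\mathcal{F}_\mathbf{x}$ is closed in $Her(\mathbf{x})$ and $s_{p_k}\to s^*$ with each $s_{p_k}\in\mathcal{F}_\mathbf{x}$, we get $s^*\in\mathcal{F}_\mathbf{x}$. Finally $J$ is Lipschitz with respect to the norm, because $|J(s_1)-J(s_2)|\leq \|s_1-s_2\|_{L_1[a,b]}$ again by the reverse triangle inequality, so $J(s^*)=\lim_k J(s_{p_k})=m$. Hence $s^*$ is a best $L_1$ approximation of $f$ in $\mathcal{F}_\mathbf{x}$, which proves existence. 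The main obstacle is the compactness step: the closedness of $\mathcal{F}_\mathbf{x}$ is already furnished by the Proposition, so the real work is ruling out a minimizing sequence running off to infinity, and this is precisely where the finite-dimensionality of $Her(\mathbf{x})$ and the coercivity estimate come into play.
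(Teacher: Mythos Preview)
Your proof is correct and follows essentially the same approach as the paper: the paper simply invokes the standard fact that a best $L_1$ approximation from a closed subset of a finite-dimensional subspace of $L_1[a,b]$ always exists, while you unpack that fact via the direct method (coercivity, Bolzano--Weierstrass, closedness of $\mathcal{F}_\mathbf{x}$ from the Proposition, and continuity of $J$). The only implicit point worth noting is that $\mathcal{F}_\mathbf{x}$ is nonempty, which follows from Lavery's existence result for $L_1$ interpolation splines and is tacitly assumed in both arguments.
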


\begin{proof}
 Let us give $f\in L_1[a,b]$ and a set of knots $\mathbf{x}=\{a=x_1<x_2<\dots<x_n=b\}$. Since $\mathcal{F}_{1,\mathbf{x}}$ is closed in the finite dimensional subspace $Her(\mathbf{x})$ of $L_1[a,b]$, there exists a best $L_1$ approximation of $f$ in $\mathcal{F}_{1,\mathbf{x}}$. \hfill $\square$
\end{proof}

One can easily define an equivalent tool using exclusively $L_2$ norm and called $L_2$ splines fit. We compare both methods in Figure \ref{fig_heaviside_glob}.

\begin{figure}[!h]
\centering
\begin{minipage}{0.45\linewidth}
\includegraphics[width=\linewidth]{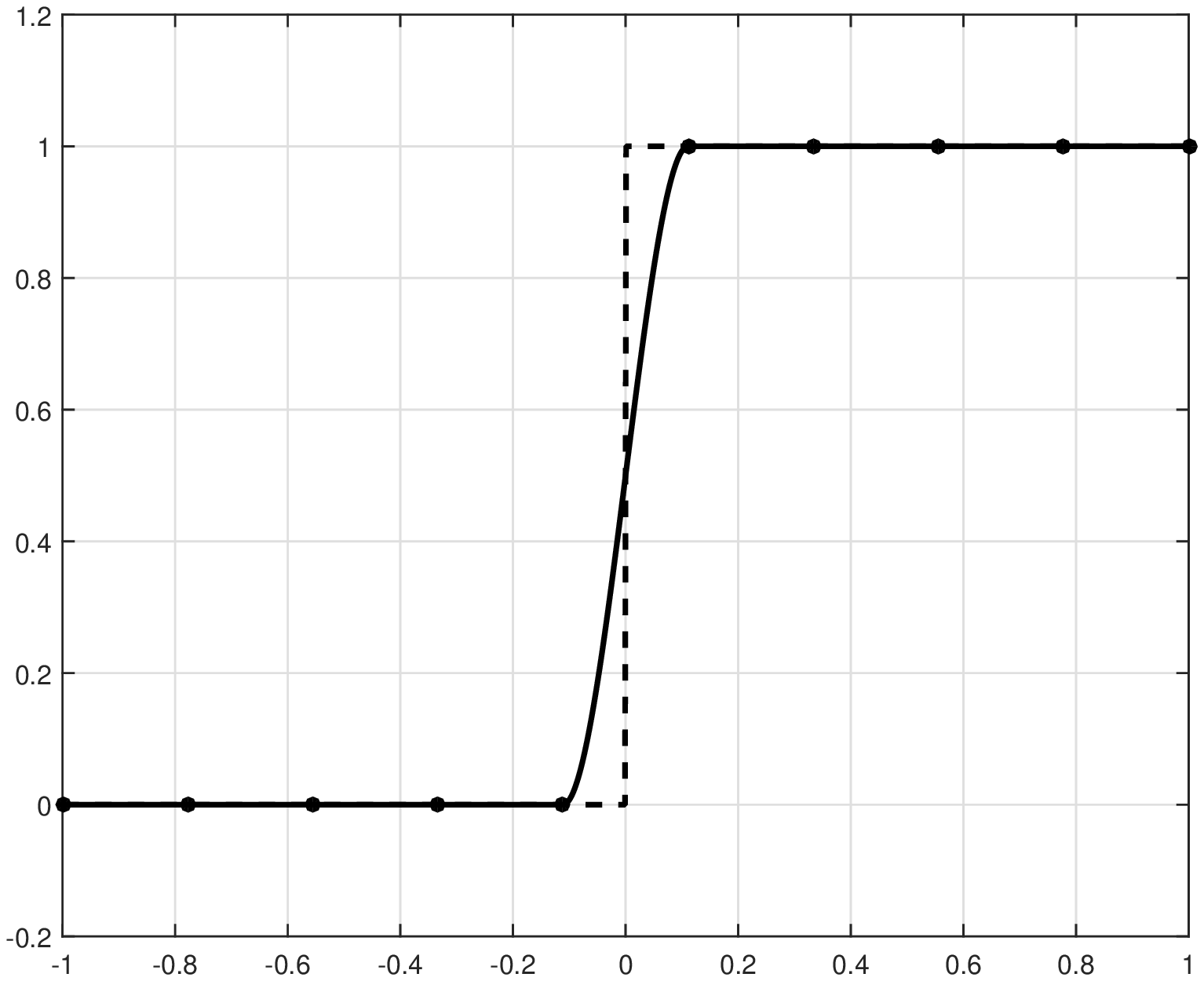}
\end{minipage}
\begin{minipage}{0.45\linewidth}
\includegraphics[width=\linewidth]{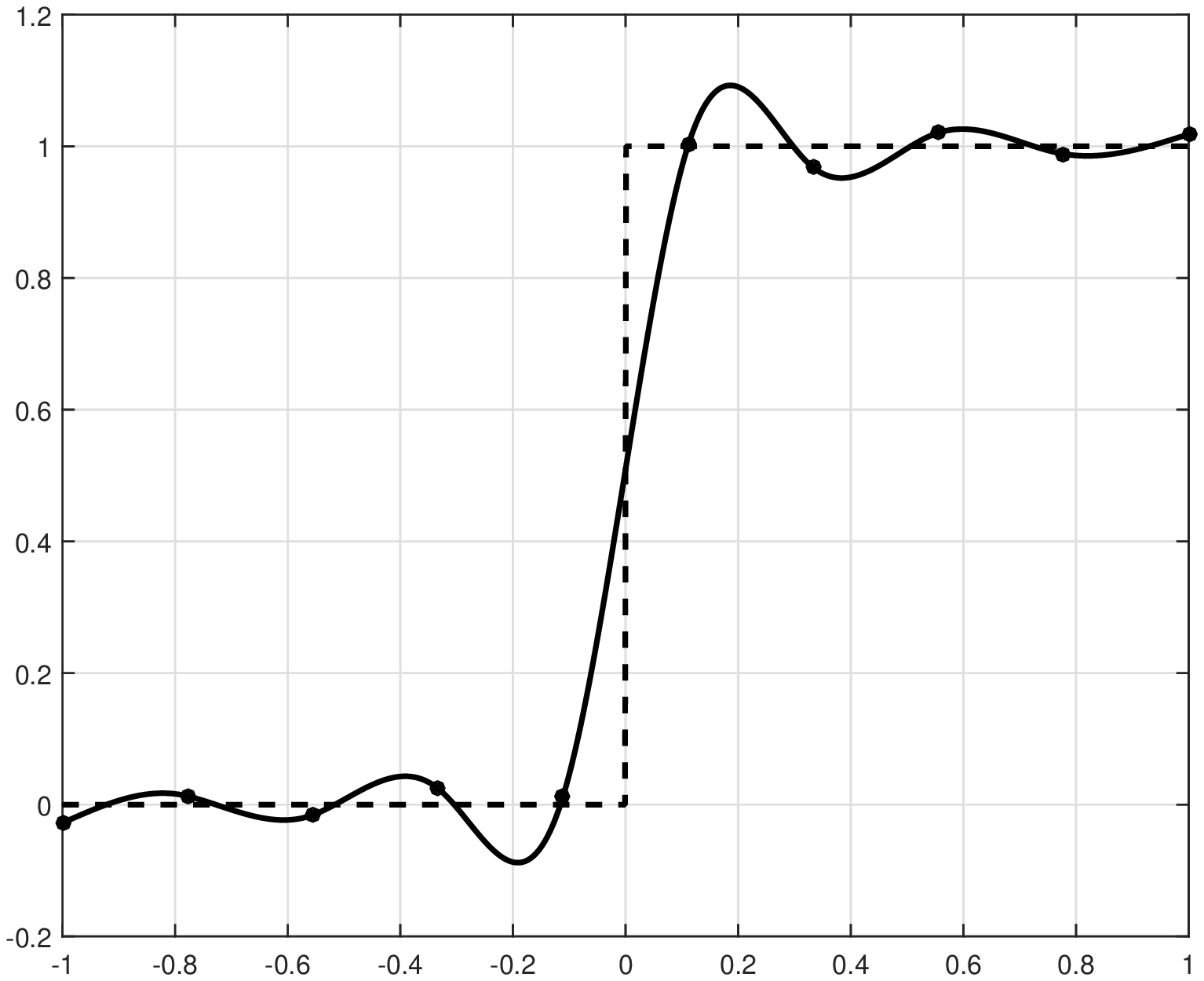}
\end{minipage}

\caption{Global $L_1$ spline fits (left) and global $L_2$ spline fits (right) of the Heaviside function with ten equally spaced knots.}
\label{fig_heaviside_glob}
\end{figure}

 $L_1$ spline fits has been previously defined for discrete data \cite{Lavery2004}. Let $(\hat{x}_i,\hat{y}_i), i=1,\dots,m$ be $m$ data points where $m\ge n$. A $L_1$ spline fits of this dataset is a best $\ell_1$ approximation of them in $\mathcal{F}_\mathbf{x}$. In other words, it is a solution of :
\begin{equation}
  \min_{s \in \mathcal{F}_\mathbf{x}} \sum_{i=1}^m \vert s(\hat{x}_i) - \hat{y}_i \vert.
\end{equation}
As in the continuous case, these splines exist since they are solutions of a best approximation problem in a closed set of a finite dimensional subpace of a normed linear space.
The results presented in Figure \ref{fig_L1SFG} indicate that $L_1$ spline fits preserve well the shape of multiscale data contrary to $L_2$ spline fits. Moreover, $L_1$ spline fits do not require human intervention to choose a parameter that balances weights of the approximation functional and the variational functional. However, the computational cost of $L_1$ spline fits is generally higher than the one of $L_1$ smoothing spline and more obviously of least squares methods.

\begin{figure}[!h]
\centering
\begin{minipage}{\linewidth}
\includegraphics[width=\linewidth]{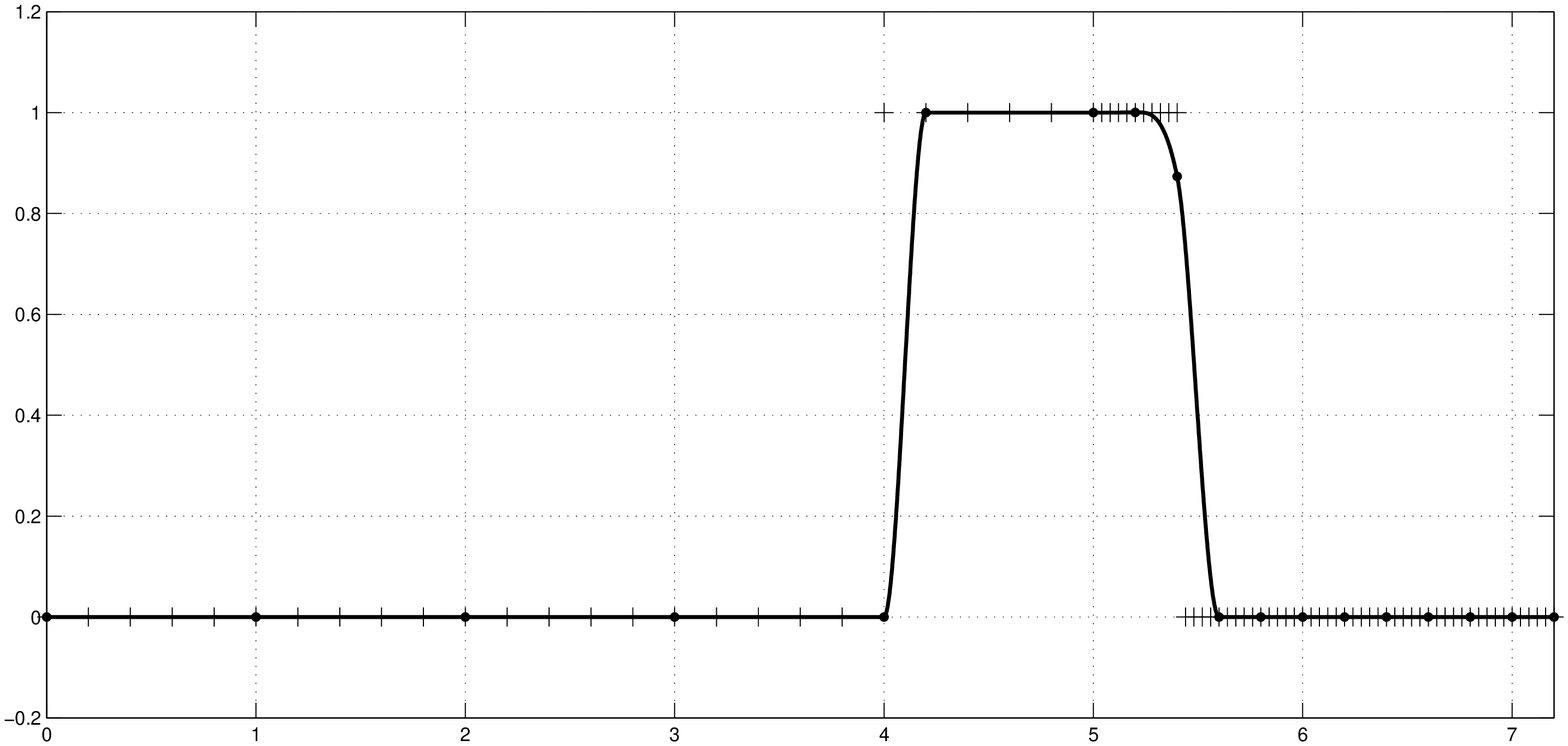}
\end{minipage}
\begin{minipage}{\linewidth}
\includegraphics[width=\linewidth]{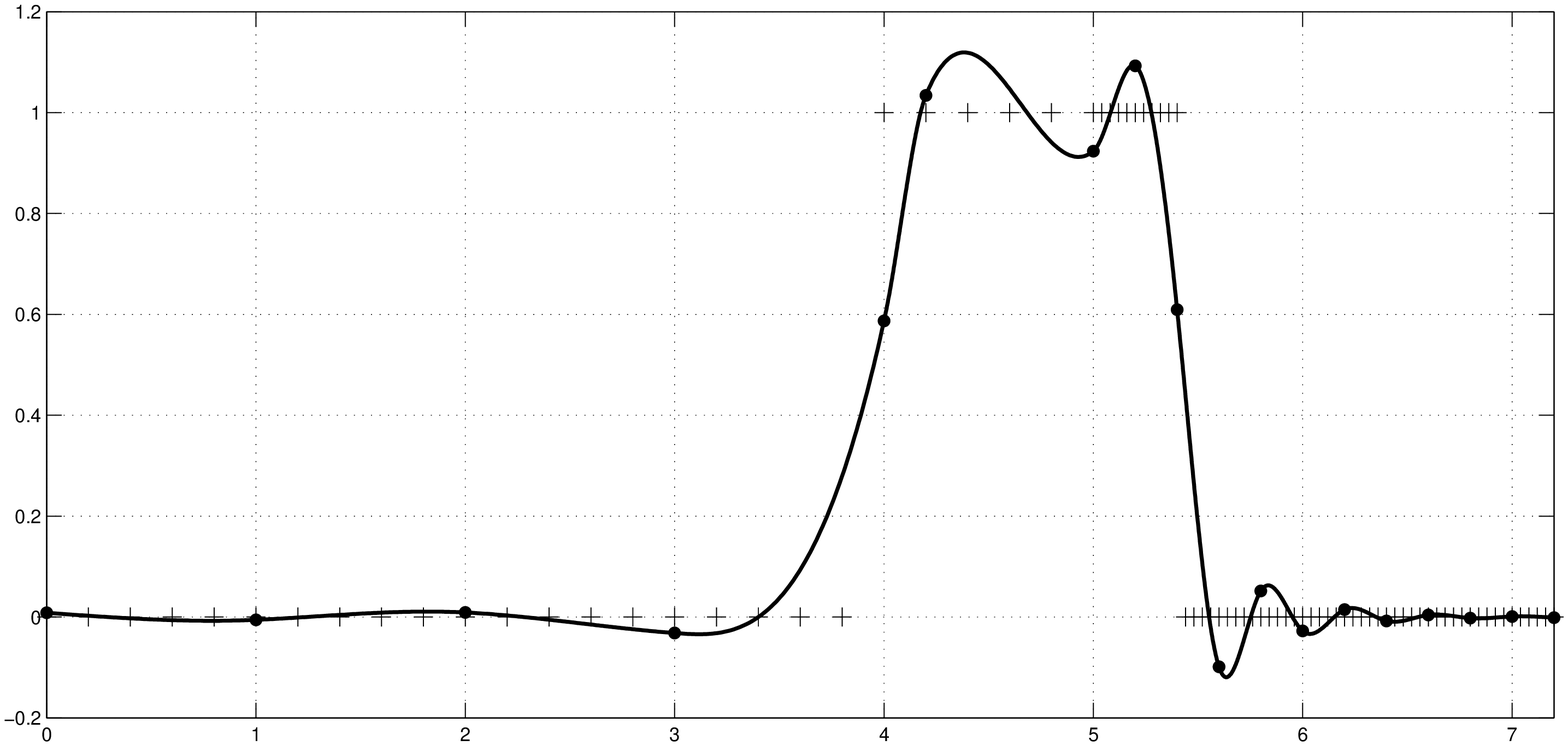}
\end{minipage}
\caption{Global $L_1$ spline fits (top) and global $L_2$ spline fits (bottom).}
\label{fig_L1SFG}
\end{figure}

\section{Sliding window algorithms for $L_1$ spline fits}

\subsection{Best approximation of functions}

We define sliding window methods with window size $m=3,\ 5,\ 7$ that we call respectively $L_1$SFL3, $L_1$SFL5 and $L_1$SFL7. For all set of $m$ consecutive knots $\mathbf{x}_{i,m}=\{x_{i-\lfloor \frac{m}{2}\rfloor},\dots,x_i,\dots,x_{i+\lfloor \frac{m}{2}\rfloor}\}$, we will determine numerically a cubic Hermite spline $s_{i,m}^*$ solution of :
\begin{equation}
  \min_{\gamma\in \mathcal{F}_{\mathbf{x}_{i,m}}} \int_{x_{i-\lfloor \frac{m}{2}\rfloor}}^{x_{i+\lfloor \frac{m}{2}\rfloor}} \vert \gamma(x) - f(x) \vert \ \mathrm{d}x.
\end{equation}
 Then we keep only middle information $z_{i}=s_{i,m}^*(x_i)$ and  $b_{i}=s_{i,m}^{*'}(x_i)$.
\begin{figure}[!h]
\centering
\begin{minipage}{0.45\linewidth}
\centering
  \includegraphics[width=\linewidth]{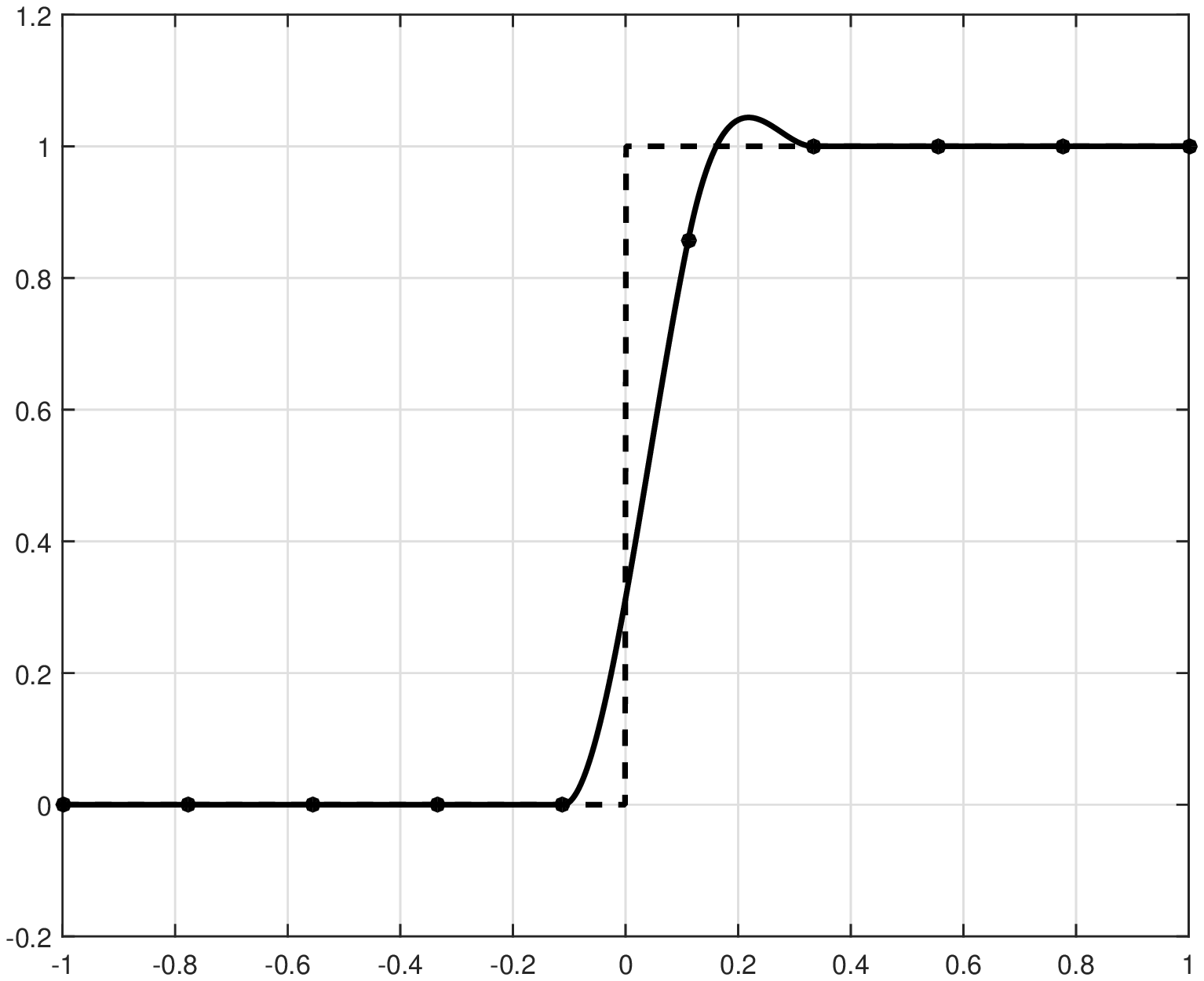}

Continuous $L_1$SFL3
\end{minipage}
\begin{minipage}{0.45\linewidth}
\centering

\includegraphics[width=\linewidth]{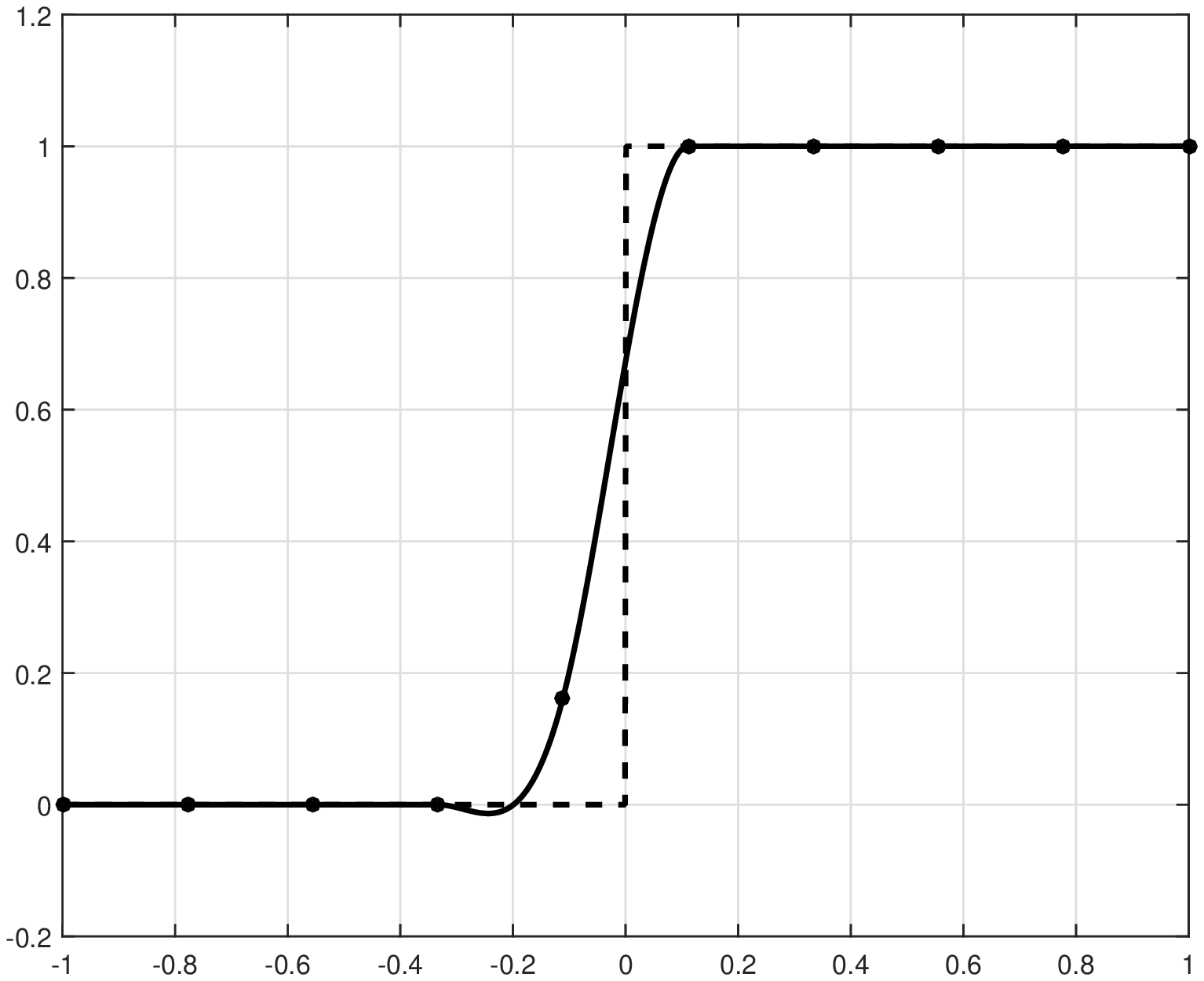}

Continuous $L_1$SFL5
\end{minipage}
\begin{minipage}{0.45\linewidth}
\centering
\includegraphics[width=\linewidth]{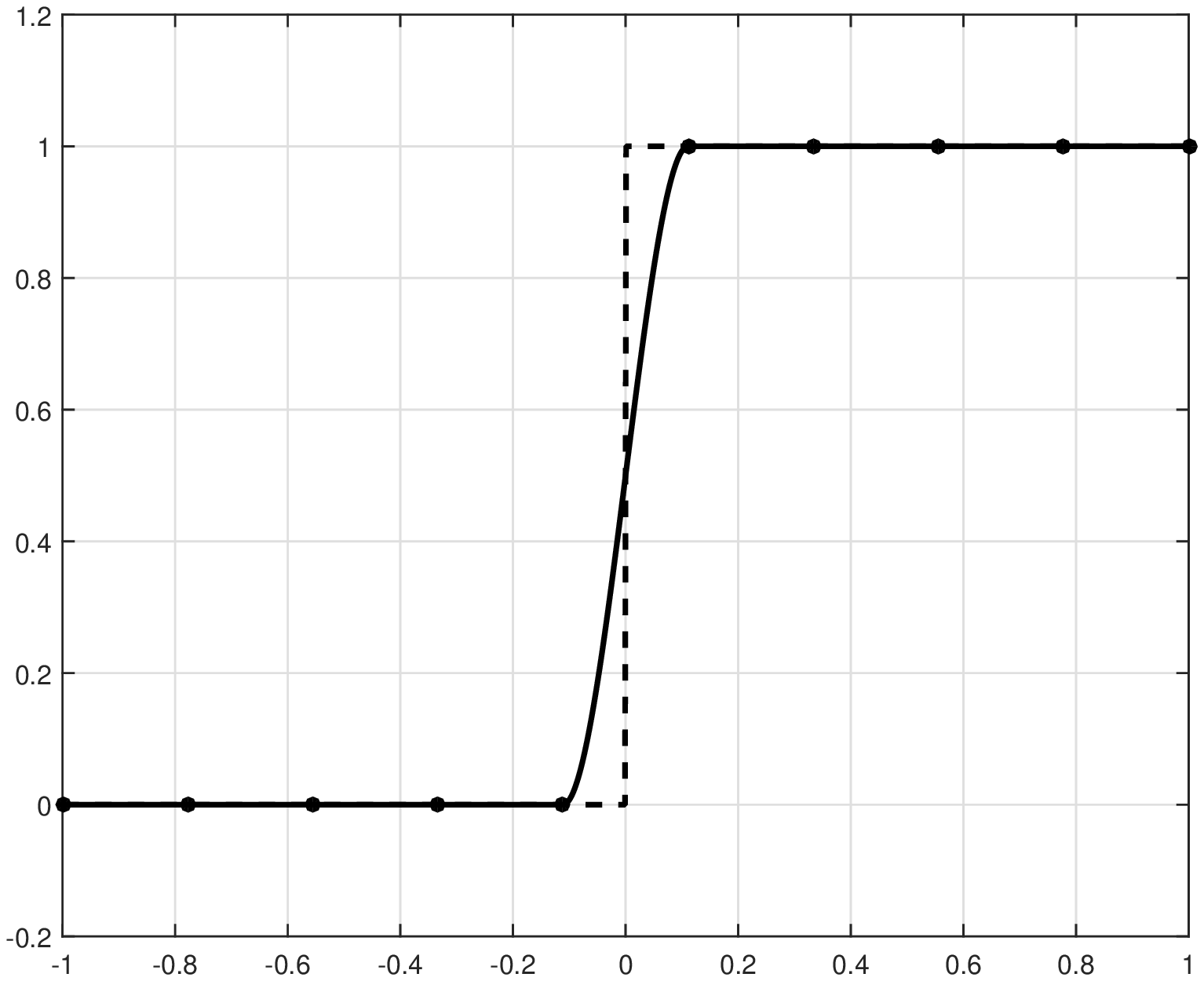}

Continuous $L_1$SFL7
\end{minipage}
\caption{$L_1$ spline fits computed by the three proposed sliding windows methods on the Heaviside function with ten equally spaced knots.}
\label{fig_Heaviside_cont}
\end{figure}

These methods have been tested on the Heaviside function with ten equally spaced knots and the results are summarized in Figure \ref{fig_Heaviside_cont}. The three-point and five-point methods fail to approximate linear shape on both side of the discontinuity. We are facing here typical cases of non-invariance of the numerical solution by rotation of the data. On both side of the discontinuities, the two windows considered are similar geometrically and should lead the same solution. Since on one side, we are able to preserve linearity by the three-point and the five-point methods, we should be able to do it on the other side. Further work will be done to make these methods invariant by rotation. The seven-point methods seems to more robust to rotation of data and so should be preferred. In this case, the seven-point solution and the global solution are identical.\\

We have also made some test about computing time and a comparison between the methods. The results are summarized in the graph in Figure \ref{fig_CPU_cont}. We can notice a great improvement of computing time when using local methods. The faster is of course the three-point method. We also notice a dual phenomenon in these results. The numerical solvation is different whether we have even or odd number of knots. It is linked to the fact of having a knot or not at the discontinuity.\\

\begin{figure}[!h]
\centering
\includegraphics[width=\linewidth]{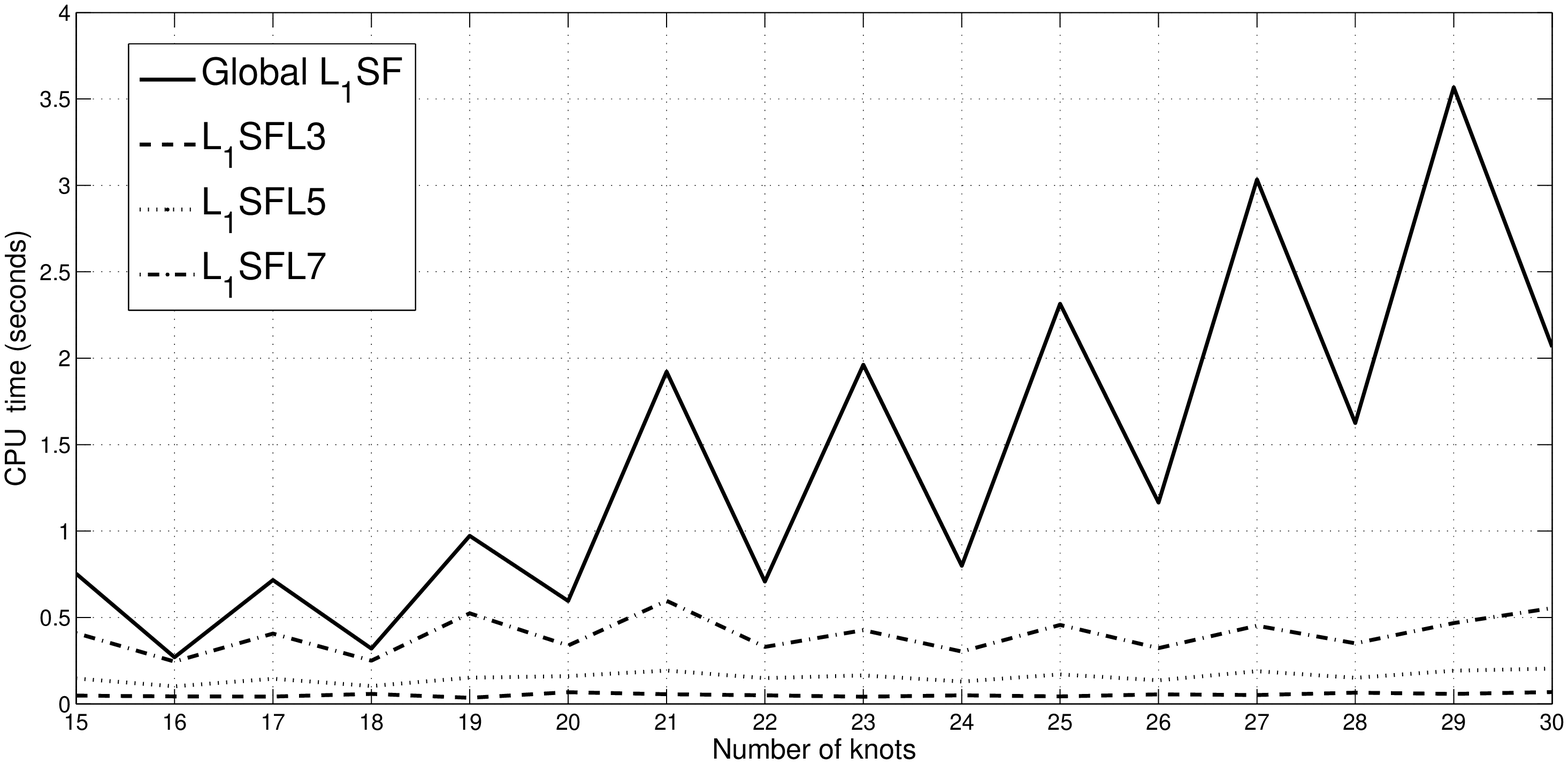}
\caption{Comparison of computational times between global and local methods}
\label{fig_CPU_cont}
\end{figure}
Regarding the graphical and computational time results, the seven-point method is a good compromise. We will confirm this tendency by the study of the discrete case.

\subsection{Best approximation of discrete data}

In this section, we apply the three-point, five-point and seven-point methods to discrete multiscale data. In other words, for all set of $m$ consecutive knots $\mathbf{x}_{i,m}=\{x_{i-\lfloor \frac{m}{2}\rfloor},\dots,x_i,\dots,x_{i+\lfloor \frac{m}{2}\rfloor}\}$, we will determine numerically a cubic Hermite spline $s_{i,m}^*$ solution of :
\begin{equation}
  \min_{\gamma\in \mathcal{F}_{\mathbf{x}_{i,m}}}  \sum_{j=i-\lfloor \frac{m}{2}\rfloor}^{i+\lfloor \frac{m}{2}\rfloor} \vert s(\hat{x}_j) - \hat{y}_j \vert.
\end{equation}
 Then we only keep information at the middle point of the window $z_{i}=s_{i,m}^*(x_i)$ and  $b_{i}=s_{i,m}^{*'}(x_i)$. The results are illustrated in Fig. \ref{fig_dataset1_L1SFL}, \ref{fig_dataset2_L1SFL} and \ref{fig_dataset3_L1SFL}. While the three point and the seven-point methods give smooth curves, the five-point method highly fails. In Fig. \ref{fig_dataset1_L1SFL}, we can notice an undershoot phenomenon and in Fig. \ref{fig_dataset2_L1SFL}, oscillations are created.\\

\begin{figure}[p]
\centering
\includegraphics[width=0.9\linewidth]{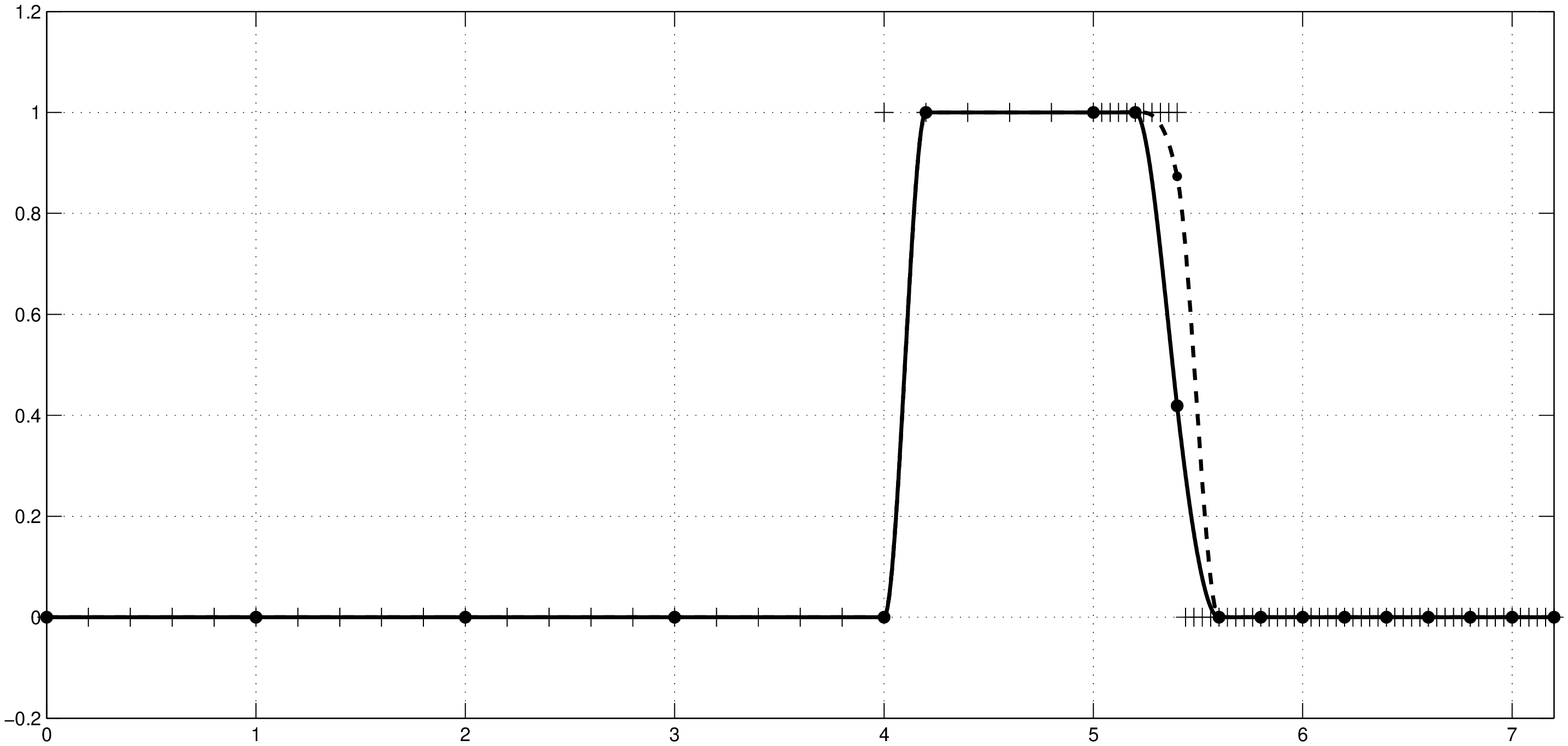}

Discrete $L_1$SFL3\\

\includegraphics[width=0.9\linewidth]{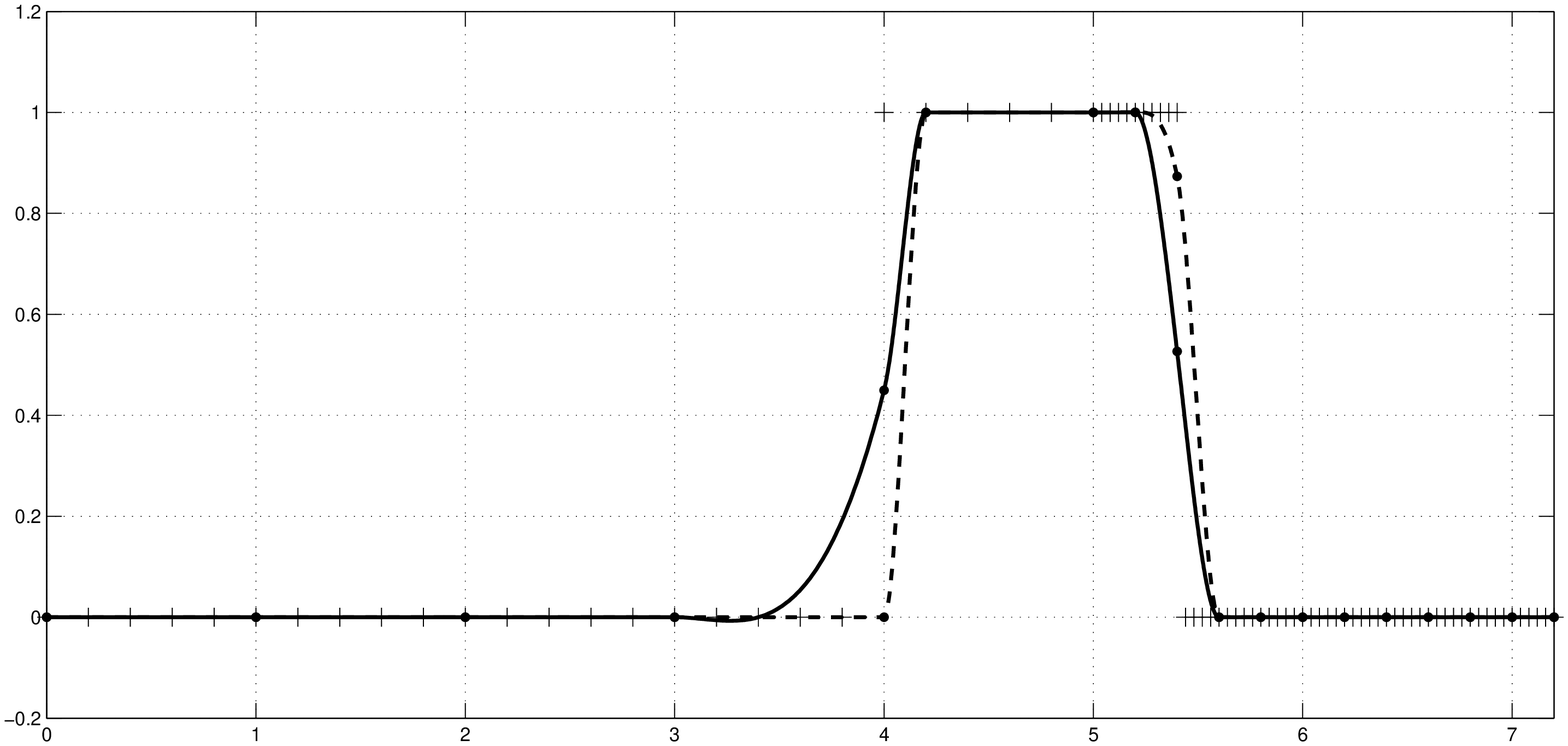}

Discrete $L_1$SFL5\\

\includegraphics[width=0.9\linewidth]{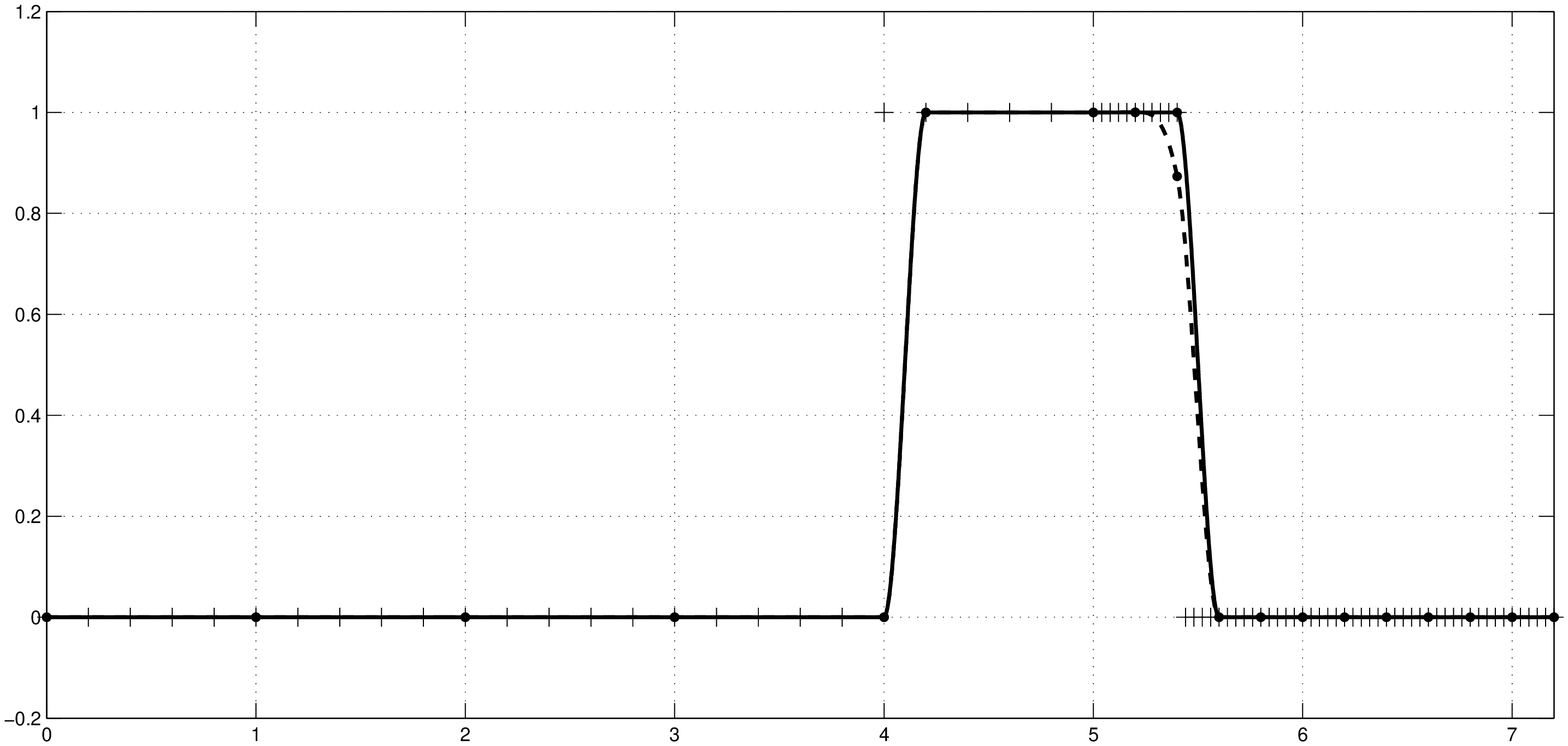}

Discrete $L_1$SFL7
\caption{Local (solid lines) and global (dashed line) $L_1$ spline fits on a multi-scale data set.}
\label{fig_dataset1_L1SFL}
\end{figure}

\begin{figure}[p]
\centering
\includegraphics[width=0.9\linewidth]{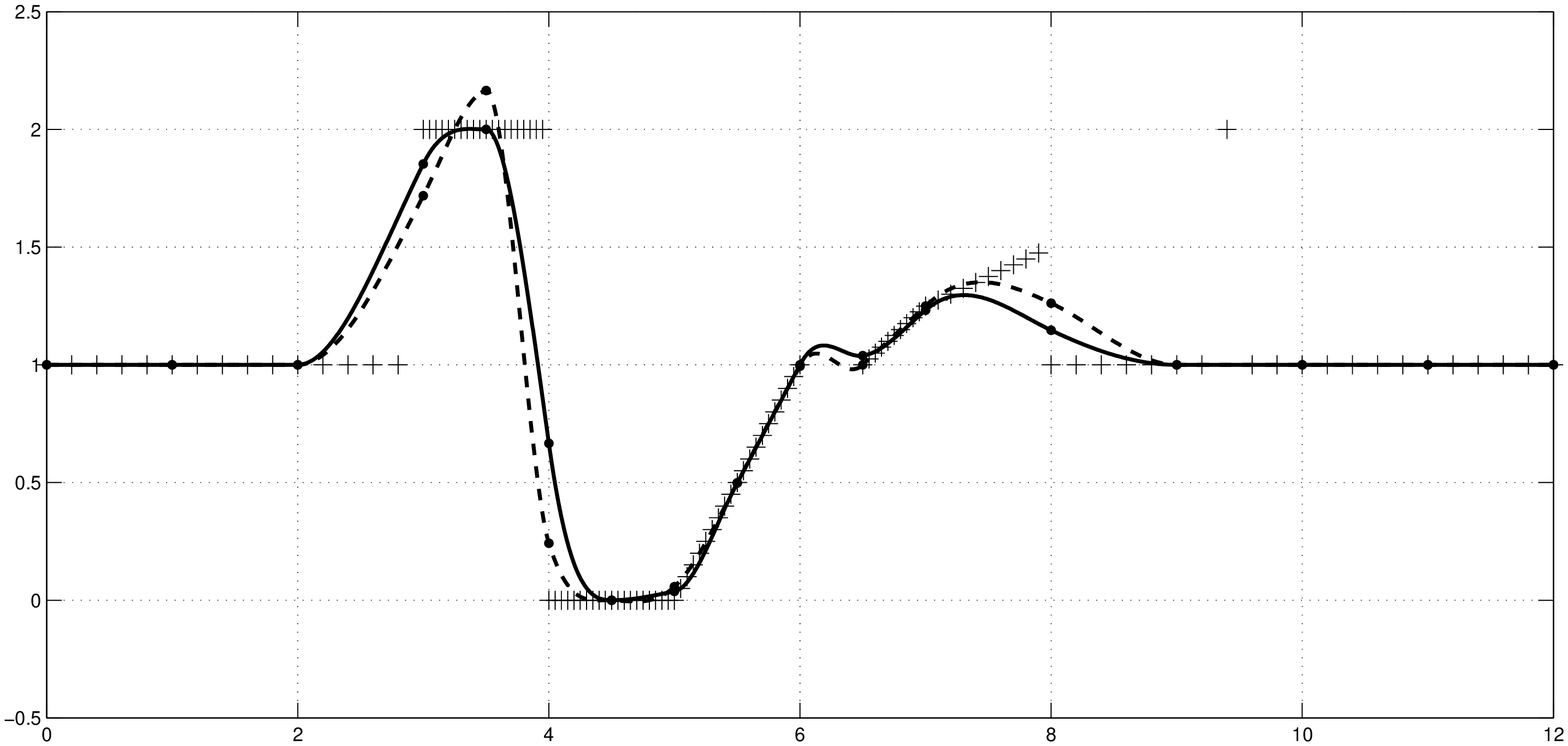}

Discrete $L_1$SFL3\\

\includegraphics[width=0.9\linewidth]{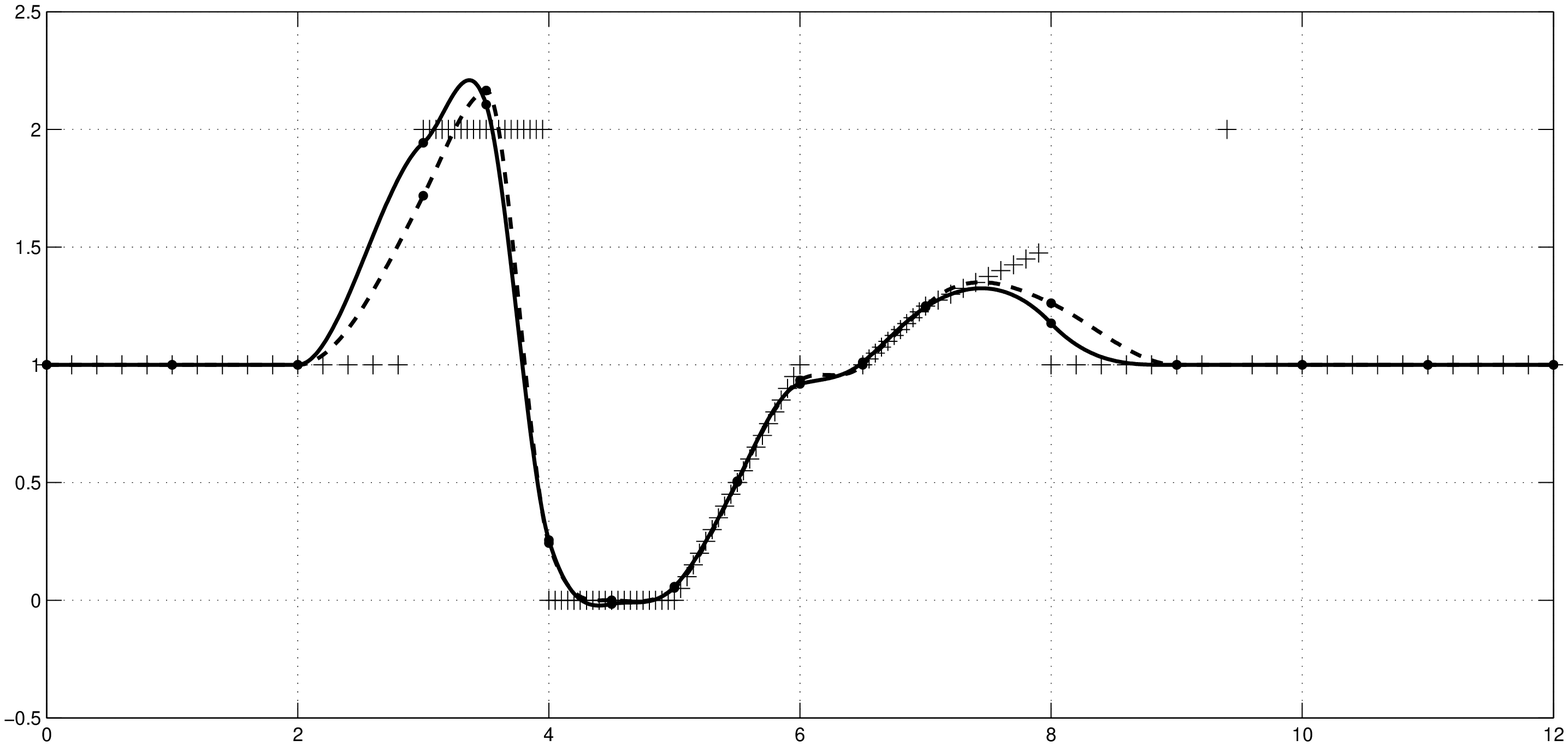}

Discrete $L_1$SFL5\\

\includegraphics[width=0.9\linewidth]{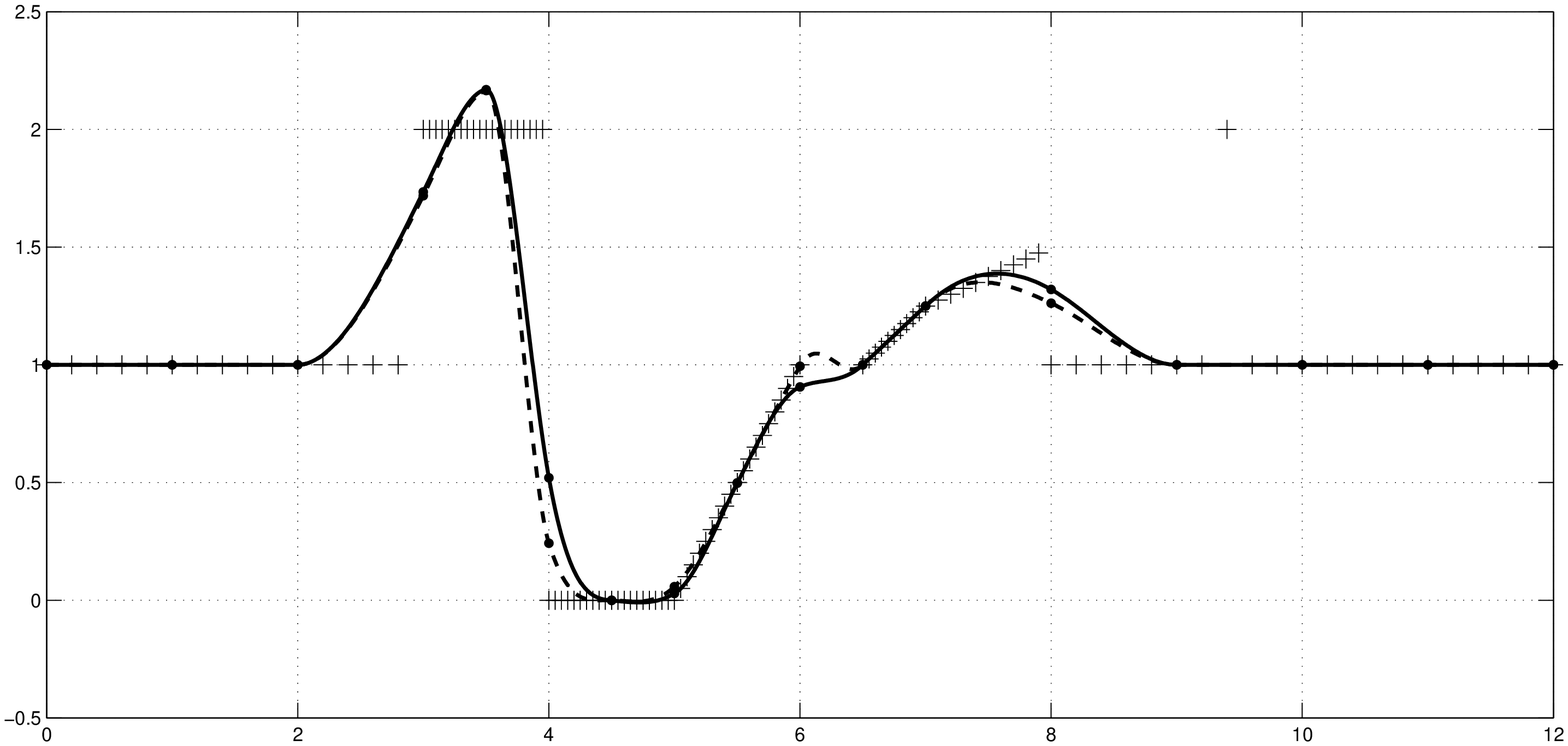}

Discrete $L_1$SFL7
\caption{Local (solid lines) and global (dashed line) $L_1$ spline fits on a multi-scale data set.}
\label{fig_dataset2_L1SFL}
\end{figure}

\begin{figure}[p]
\centering
\includegraphics[width=0.9\linewidth]{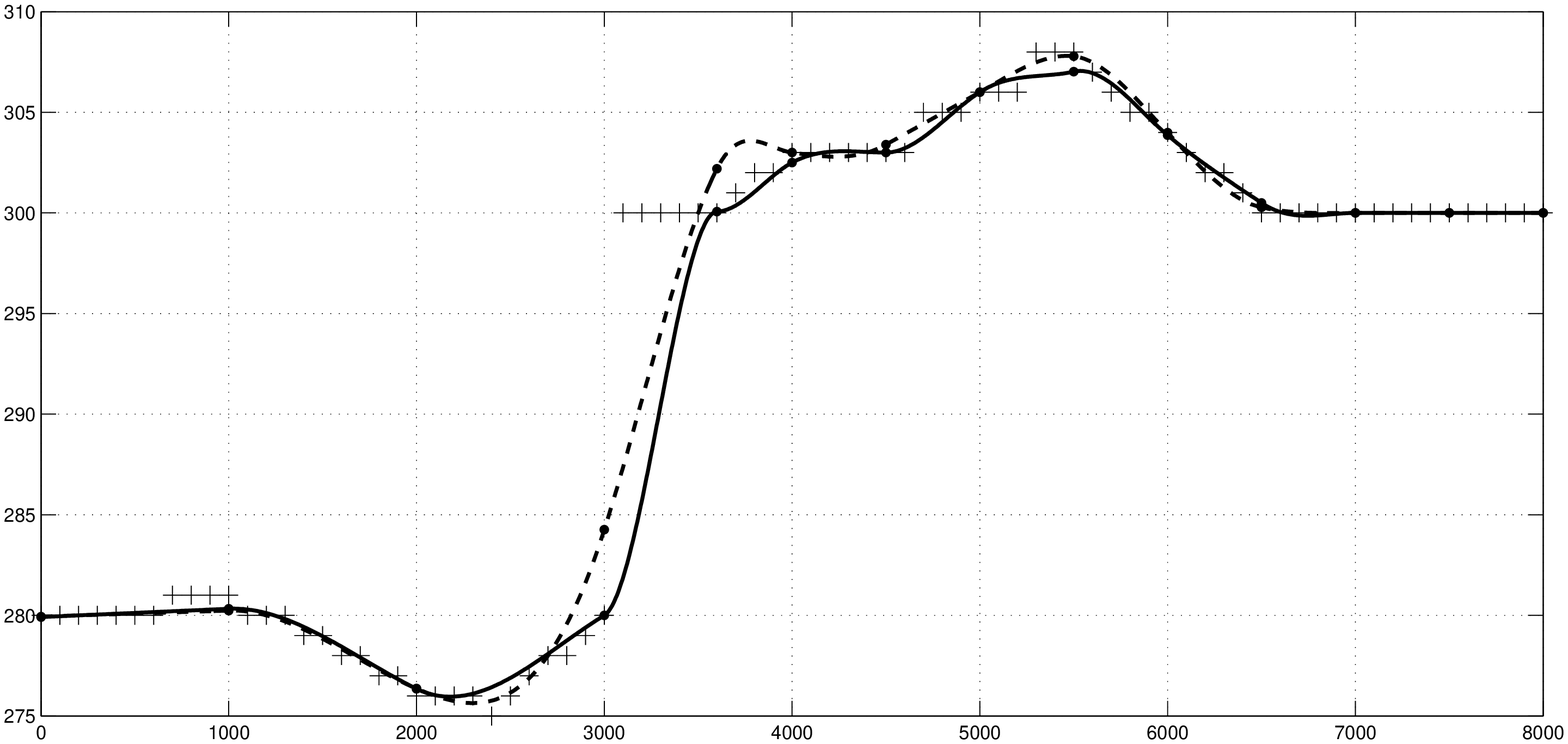}

Discrete $L_1$SFL3\\

\includegraphics[width=0.9\linewidth]{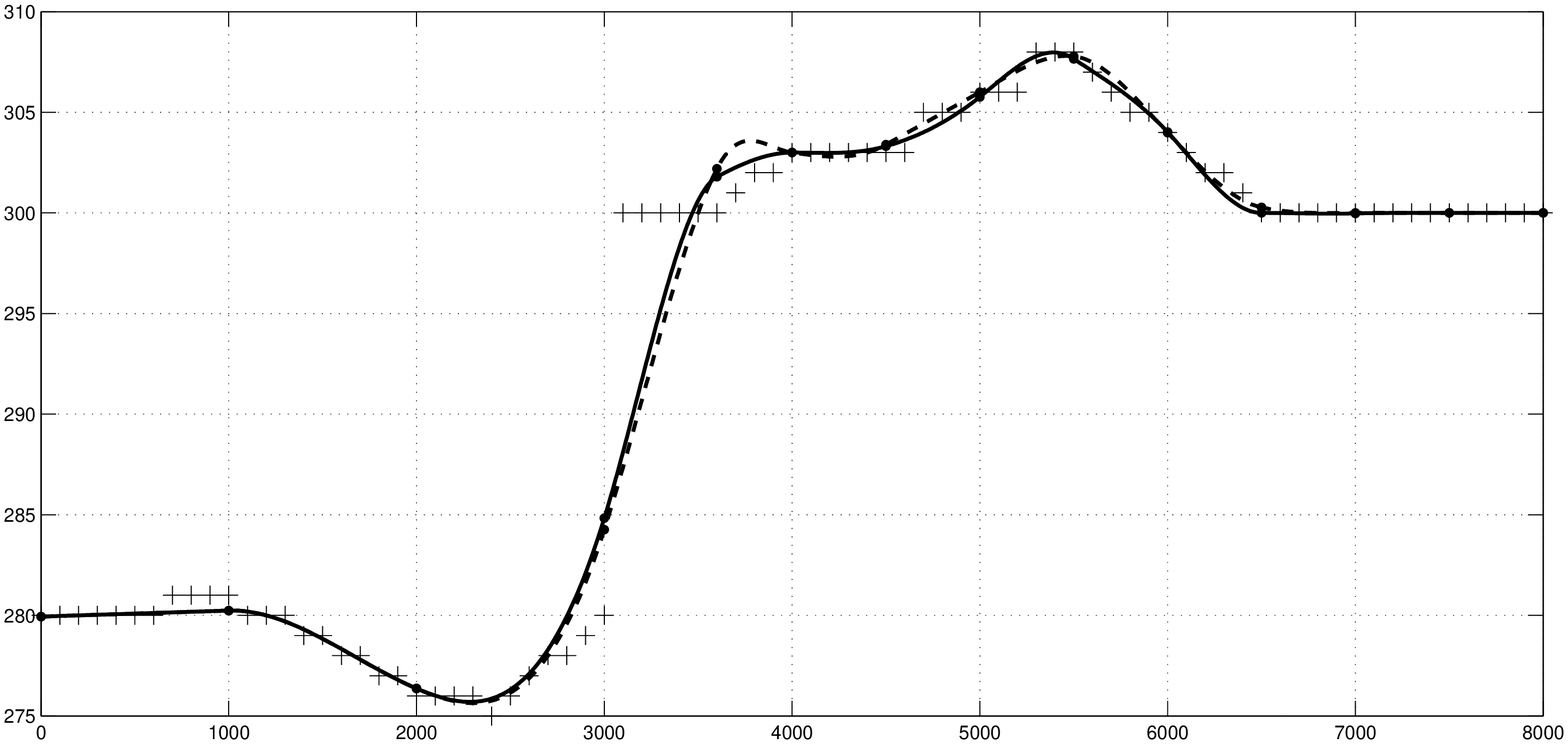}

Discrete $L_1$SFL5\\

\includegraphics[width=0.9\linewidth]{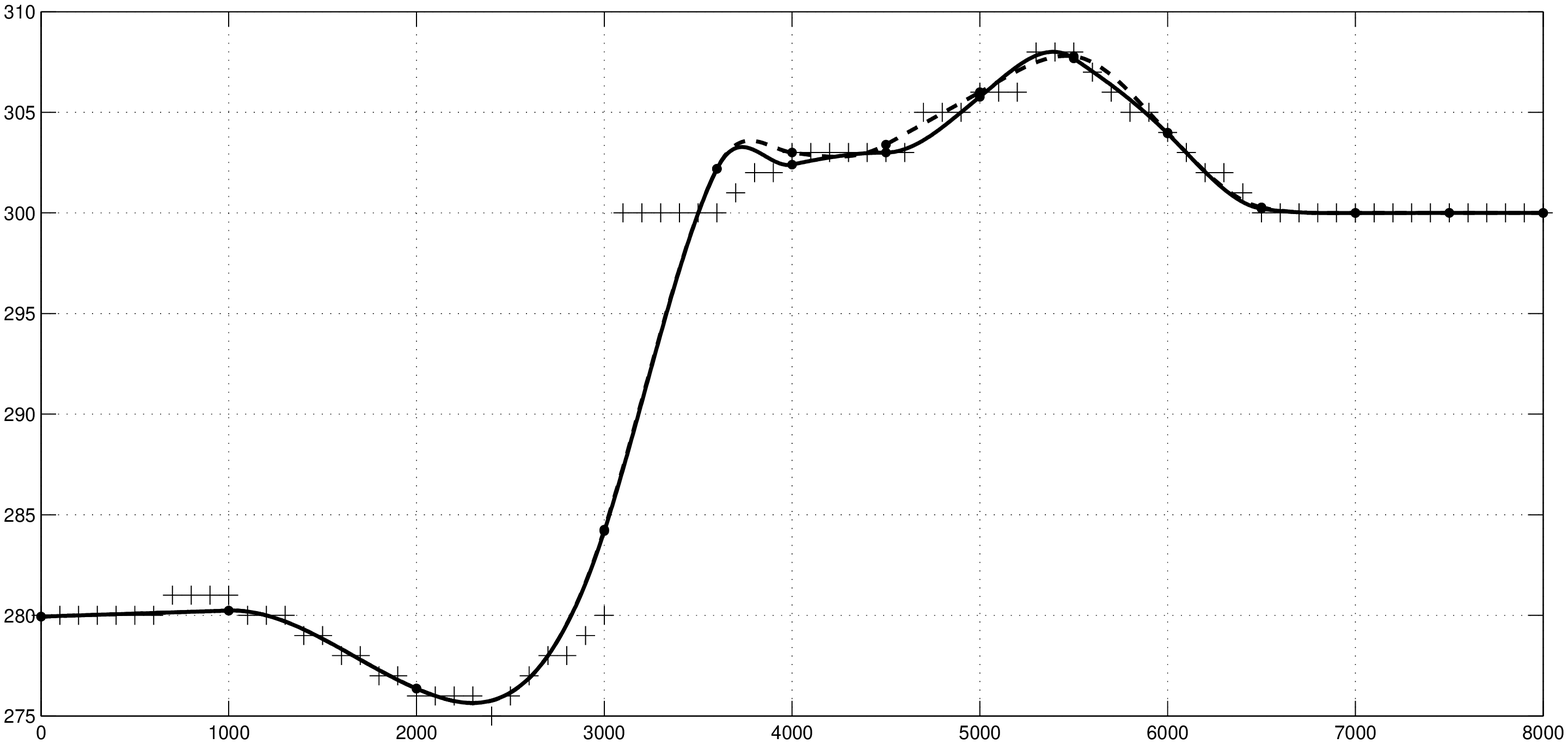}

Discrete $L_1$SFL7
\caption{Local (solid lines) and global (dashed line) $L_1$ spline fits on a multi-scale data set.}
\label{fig_dataset3_L1SFL}
\end{figure}

Like in the continuous case, the seven-point method is the closest graphically to the global method. In some cases like Fig. \ref{fig_dataset1_L1SFL}, linear shape are preserved in a better way.

\section{Modification of $L_1$SFL5 and $L_1$SFL7}

The methods presented before may exhibit on multiscale configurations some undesirable features. We have observed them with the discrete $L_1$SFL5 in Fig.\ref{fig_dataset1_L1SFL} and \ref{fig_dataset2_L1SFL} and with the discrete $L_1$SFL3 in Fig.\ref{fig_dataset3_L1SFL}. This is typically due to a lack of consistency between the different windows. To reduce this phenomenon, we propose two others sliding window methods, $L_1$SFL5-3 and $L_1$SFL7-3, which are respectively a five-point and a seven-point method. The difference with the previous $L_1$SFL5 and $L_1$SFL7 is that we keep now the three middle pieces of information (approximation points and derivative value) instead of one single information. In other words, for sets of $m$ consecutive knots $\mathbf{x}_{i,m}=\{x_{i-\lfloor \frac{m}{2}\rfloor},\dots,x_i,\dots,x_{i+\lfloor \frac{m}{2}\rfloor}\}$ with $i$ going from $\lfloor \frac{m}{2}\rfloor+1$ to $n-\lfloor \frac{m}{2}\rfloor$ by step 3, we will determine numerically a cubic Hermite spline $s_{i,m}^*$ solution of :
\begin{equation}
  \min_{\gamma\in \mathcal{F}_{\mathbf{x}_{i,m}}}  \sum_{j=i-\lfloor \frac{m}{2}\rfloor}^{i+\lfloor \frac{m}{2}\rfloor} \vert s(\hat{x}_j) - \hat{y}_j \vert.
\end{equation}
 Then we keep information at the three central knots:
 \begin{itemize}
   \item $z_{i-1}=s_{i-1,m}^*(x_{i-1})$, $z_{i}=s_{i,m}^*(x_i)$ and $z_{i+1}=s_{i+1,m}^*(x_{i+1})$.
   \item $b_{i-1}=s_{i-1,m}^{*'}(x_{i-1})$,  $b_{i}=s_{i,m}^{*'}(x_i)$ and  $b_{i+1}=s_{i+1,m}^{*'}(x_{i+1})$.
 \end{itemize}

\begin{figure}[!h]
\centering
\includegraphics[width=\linewidth]{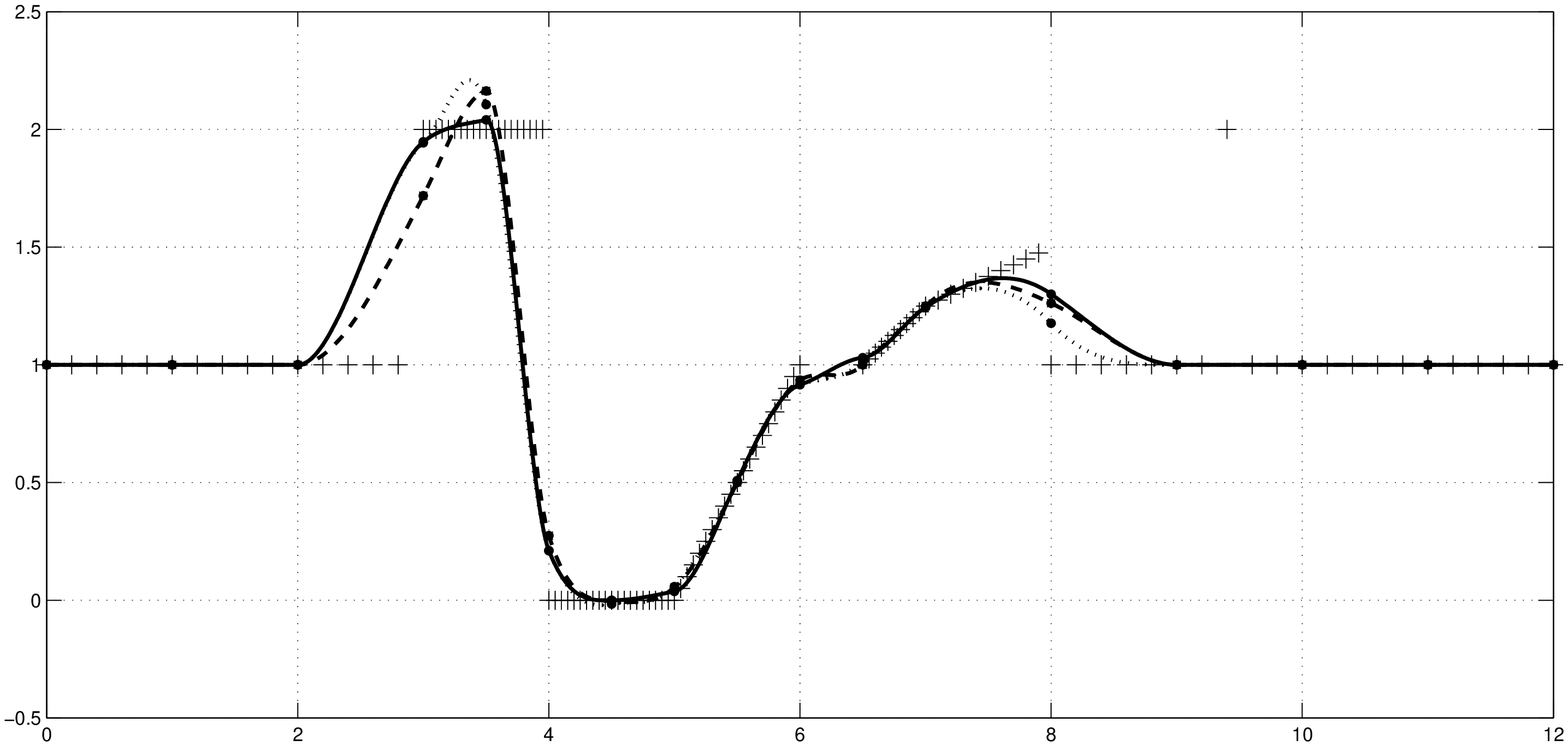}

Discrete $L_1$SFL5-3\\

\includegraphics[width=\linewidth]{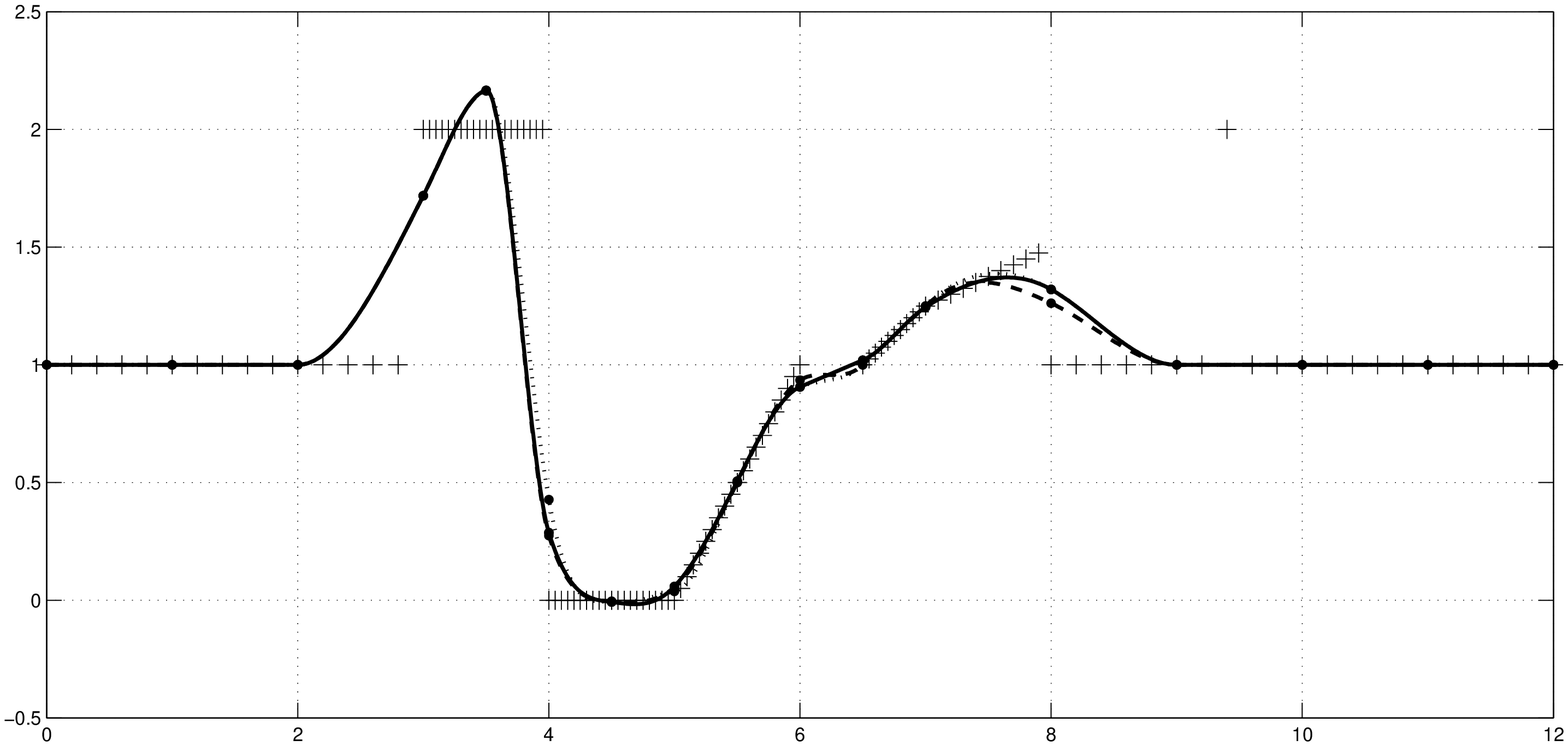}

Discrete $L_1$SFL7-3\\
\caption{Application of discrete $L_1$SFL5-3 and $L_1$SFL7-3 (solid line) on a multiscale dataset. Comparison with previous discrete $L_1$SFL5 and $L_1$SFL7 (dotted line) and global $L_1$SF (dashed line).}
\label{fig_dataset2_L1SFL-3}
\end{figure}

These methods have also the advantage of requiring less computation than the previous $L_1$SFL5 and $L_1$SFL7. Indeed, with $L_1$SFL5-3 and $L_1$SFL7-3, the window slides more rapidly since we do not treat as before every sequence of five, resp. seven, consecutive knots.\\

By this way, we were able to enhance consistency in the five point solution. However, the seven-point method is still the closest one to the initial global method. Since the global method is for now our reference, we select this method for further tests on noisy datasets.\\

We have applied firstly in Fig.\ref{fig_test_heav} our $L_1$SFL7-3 method to a 100-point configuration initially evenly distributed on the Heaviside function and then corrupted by a Gaussian noise with zero mean and 0.03 standard deviation.\\

\begin{figure}[!h]
\centering
\includegraphics[width=\linewidth]{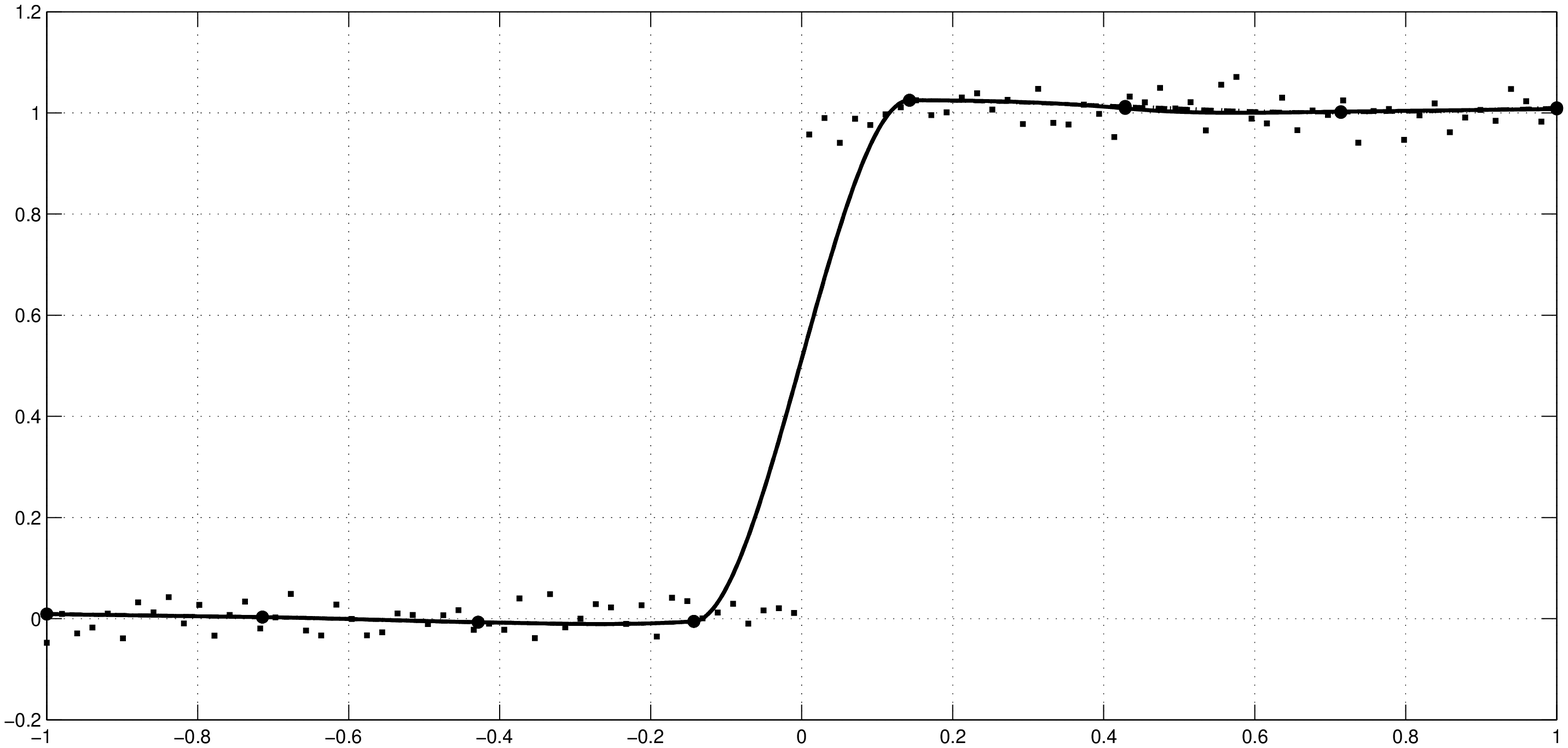}

Discrete $L_1$SFL7-3\\

\includegraphics[width=\linewidth]{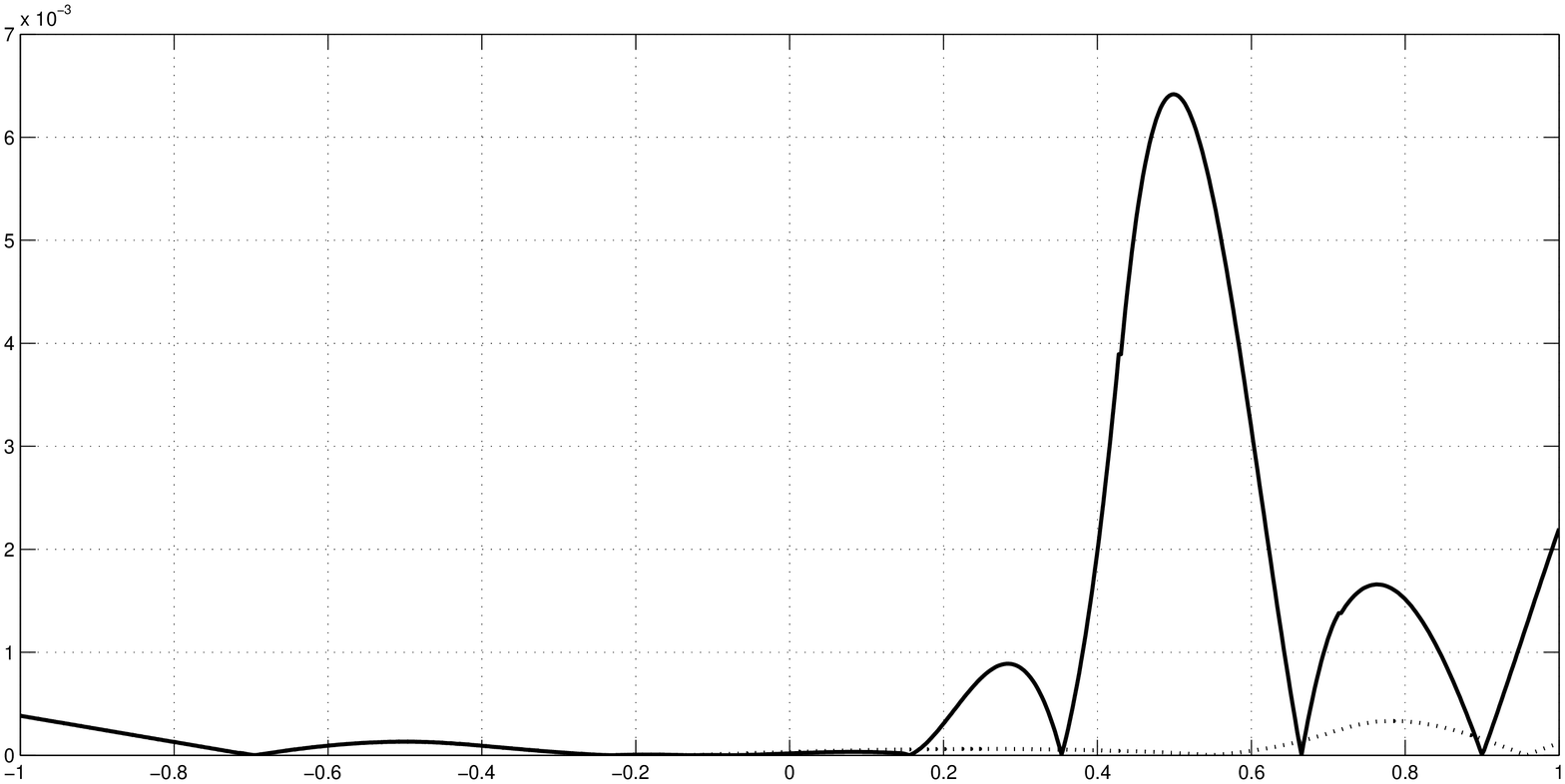}

Pointwise  error to the global solution
\caption{Application of discrete $L_1$SFL7-3 (solid line) on a noisy Heaviside-like dataset. Comparison with previous discrete $L_1$SFL7 (dotted line) and global $L_1$SF (dashed line).}
\label{fig_test_heav}
\end{figure}

Results are compared with the global method and the $L_1$SFL7 method. Solutions are not identical but are similar as the error plot in Fig.\ref{fig_test_heav} suggests it. We have then applied the method on a 300-point configuration lying initially in the sine function and then corrupted by a Gaussian noise with zero mean and 0.05 standard mean. The observations are the same and graphical results are given in Fig.\ref{fig_test_sine}.

\begin{figure}[!h]
\centering
\includegraphics[width=\linewidth]{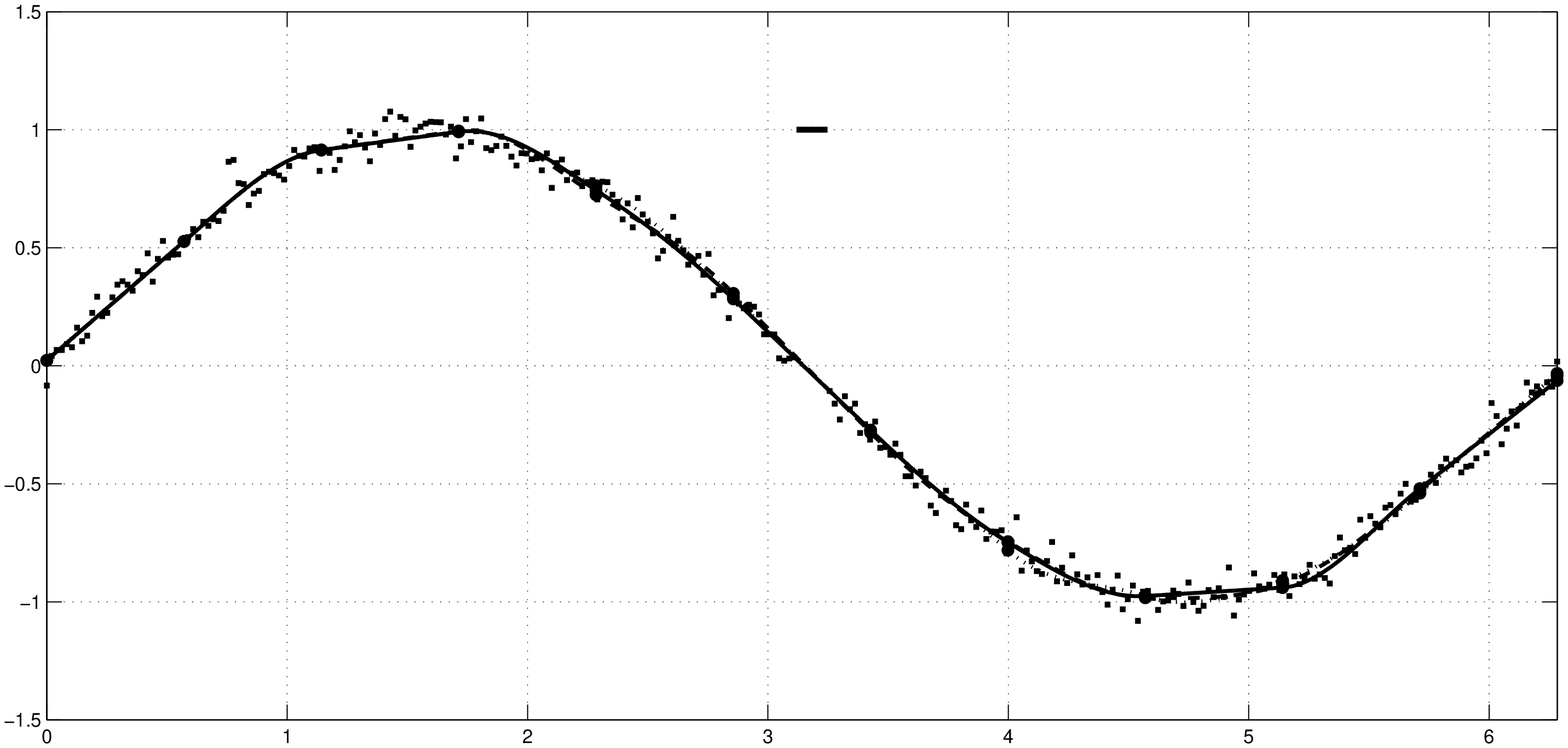}

Discrete $L_1$SFL7-3\\

\includegraphics[width=\linewidth]{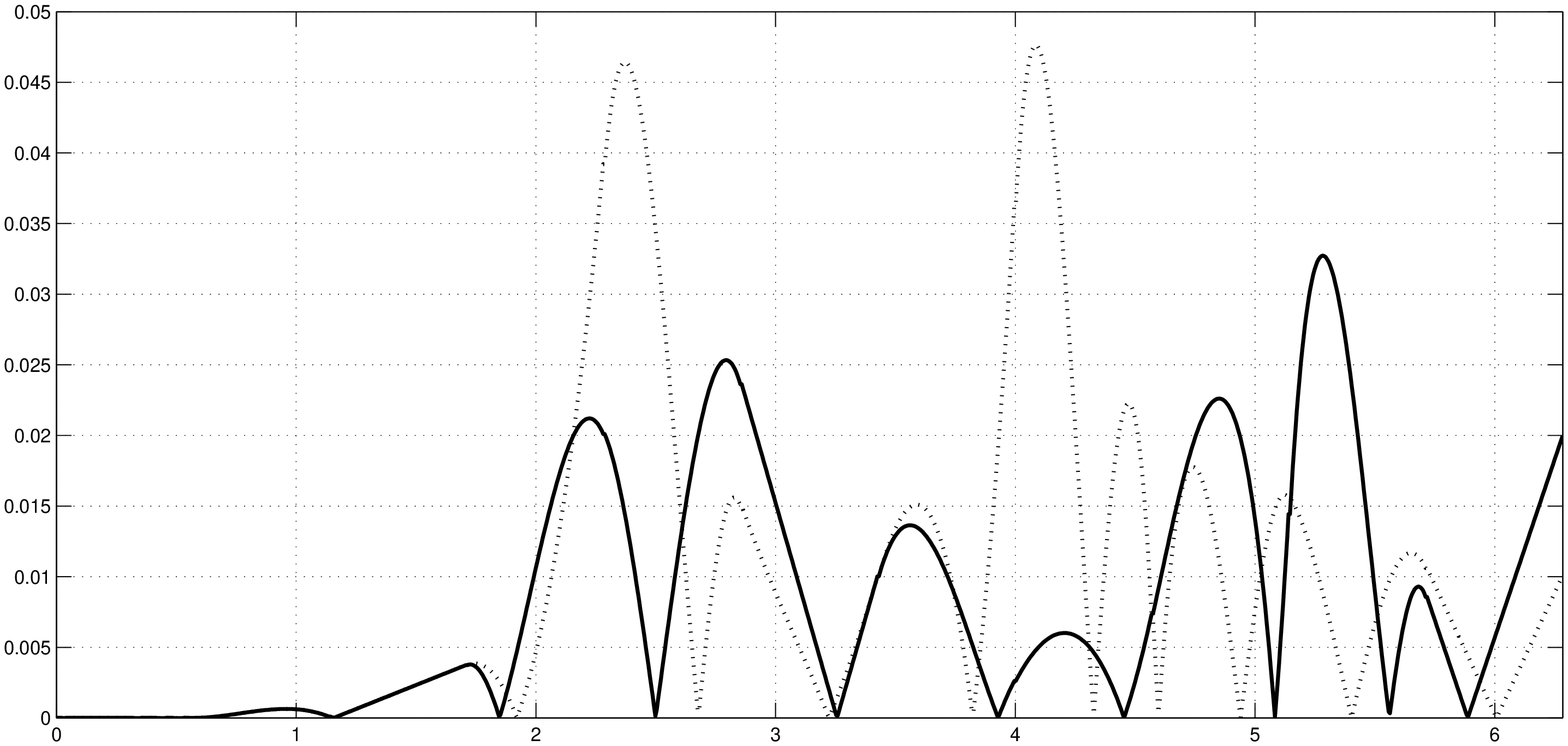}

Pointwise  error to the global solution
\caption{Application of discrete $L_1$SFL7-3 (solid line) on a noisy sinus-like dataset. Comparison with previous discrete $L_1$SFL7 (dotted line) and global $L_1$SF (dashed line).}
\label{fig_test_sine}
\end{figure}

\newpage
\section{Conclusion}
In this article, we have shown the existence of $L_1$ splines fits which are very efficient to approximate data with abrupt changes but time-consuming. In order to obtain lower algorithmic complexity methods, we have tested different methods of computation of $L_1$ spline fits by sliding window process for both continuous and discrete case. At the end of this study, a seven-point method named $L_1$SFL7-3 should be chosen. It is currently a good compromise between keeping the geometrical properties of global $L_1$ spline fits and decreasing computations. The method has linear computational complexity and can be parallelized. This method has shown good results on both multiscale datasets and noisy datasets.

\section{Acknowledgments}
The authors thank deeply Shu-Cherng Fang and Ziteng Wang from the Industrial and Systems Engineering Department of North Carolina State University and John E. Lavery, retired from the Army Research Office, for their comments and suggestions that improved the contents of this paper.

\newpage
\bibliographystyle{alpha}
\bibliography{BibArXiV}
\end{document}